\definecolor{darkblue}{rgb}{0,0.0,0.55}
\newcommand{\ltwosq}[1]{\norm{#1}_2^2} 
\newcommand{\eigmax}{\lambda_{\rm{max}}}
\newcommand{\rowsp}{\mathop{\rm row}}
\newcommand{\nulsp}{\mathop{\rm null}}
\newcommand{\spann}{\mathop{\rm span}}
\providecommand{\tr}{\mathop{\rm tr}}
\providecommand{\rank}{\mathop{\rm rank}}
\newcommand{\matent}[3]{[{#1}]_{{#2},{#3}}}
\newcommand{\matcol}[2]{[{#1}]_{\cdot,{#2}}}
\newcommand{\matrow}[2]{[{#1}]_{{#2},\cdot}}
\newcommand{\veccomp}[2]{[{#1}]_{#2}}
\newcommand{\dotprod}[2]{\left \langle{#1},{#2} \right \rangle}
\newcommand{\zeros}[1]{\mathbf{0}_{#1}}
\newcommand{\indsetz}[1]{B_{#1}}
\newcommand{\amatpm}[1]{C_{#1}}
\newcommand{\pert}{d}
\newcommand{\algmain}{\func{SOSP-Check}}
\newcommand{\algfosubdif}{\func{FO-Subdiff-Zero-Test}}
\newcommand{\algfoinc}{\func{FO-Increasing-Test}}
\newcommand{\algso}{\func{SO-Test}}
\newcommand{\emprisk}{\mathfrak{R}}
\newcommand{\vars}{\texttt}
\newcommand{\func}{\textsc}
\newcommand{\npsd}[2]{\|{#1}\|_{#2}} 
\title{Efficiently testing local optimality and \\escaping saddles for ReLU networks}
\author{Chulhee Yun, Suvrit Sra \& Ali Jadbabaie \\
	Massachusetts Institute of Technology\\
	Cambridge, MA 02139, USA\\
	\texttt{\{chulheey,suvrit,jadbabai\}@mit.edu}
}
\begin{document}

\maketitle

\begin{abstract}
We provide a theoretical algorithm for checking local optimality and escaping saddles at \emph{nondifferentiable} points of empirical risks of two-layer ReLU networks. 
Our algorithm receives any parameter value and returns: \emph{local minimum}, \emph{second-order stationary point}, or \emph{a strict descent direction}.
The presence of $M$ data points on the nondifferentiability of the ReLU divides the parameter space into at most $2^M$ regions, which makes analysis difficult. By exploiting polyhedral geometry, we reduce the total computation down to one convex quadratic program (QP) for each hidden node, $O(M)$ (in)equality tests, and one (or a few) nonconvex QP. 
For the last QP, we show that our specific problem can be solved efficiently, in spite of nonconvexity. In the benign case, we solve one equality constrained QP, and we prove that projected gradient descent solves it exponentially fast. 
In the bad case, we have to solve a few more inequality constrained QPs, but we prove that the time complexity is exponential only in the number of inequality constraints.
Our experiments show that either benign case or bad case with very few inequality constraints occurs, implying that our algorithm is efficient in most cases.
\end{abstract}

\vspace*{-6pt}
\section{Introduction}
\vspace*{-4pt}
Empirical success of deep neural networks has sparked great interest in the theory of deep models. From an optimization viewpoint, the biggest mystery is that deep neural networks are successfully trained by gradient-based algorithms despite their nonconvexity. On the other hand, it has been known that training neural networks to global optimality is NP-hard~\citep{blum1988training}. It is also known that even checking \emph{local optimality} of nonconvex problems can be NP-hard \citep{murty1987some}. Bridging this gap between theory and practice is a very active area of research, and 
there have been many attempts to understand why optimization works well for neural networks, by studying the loss surface \citep{baldi1989neural, yu1995local, kawaguchi2016deep, soudry2016no, nguyen2017loss, nguyen2017losscnn, safran2017spurious, laurent2017multilinear, yun2019small, yun2018global, zhou2018critical, wu2018no, shamir2018resnets} and the role of (stochastic) gradient-based methods \citep{tian2017analytical, brutzkus2017globally, zhong2017recovery, soltanolkotabi2017learning, li2017convergence, zhang2018learning, brutzkus2018sgd, wang2018learning, li2018learning, du2018gradientB, du2017gradient, du2018gradientA, allen2018convergence, zou2018stochastic, zhou2019sgd}. 

One of the most important beneficial features of convex optimization is the existence of an optimality test (e.g., norm of the gradient is smaller than a certain threshold) for termination, which gives us a certificate of (approximate) optimality.
In contrast, many practitioners in deep learning rely on running first-order methods for a fixed number of epochs, without good termination criteria for the optimization problem. This means that the solutions that we obtain at the end of training are not necessarily global or even local minima. \citet{yun2018global, yun2019small} showed efficient and simple global optimality tests for deep \emph{linear} neural networks, but such optimality tests cannot be extended to general nonlinear neural networks, mainly due to nonlinearity in activation functions.

Besides nonlinearity, in case of ReLU networks significant additional challenges in the analysis arise due to nondifferentiability, and obtaining a precise understanding of the nondifferentiable points is still elusive. ReLU activation function $h(t) = \max\{t, 0\}$ is nondifferentiable at $t=0$. This means that, for example, the function $f(w,b) \defeq (h(w^T x + b)-1)^2$ is nondifferentiable for any $(w,b)$ satisfying $w^T x + b = 0$. See Figure~\ref{fig:nondiff} for an illustration of how the empirical risk of a ReLU network looks like. Although the plotted function does not exactly match the definition of empirical risk we study in this paper, the figures help us understand that the empirical risk is continuous but piecewise differentiable, with affine hyperplanes on which the function is nondifferentiable.

\begin{figure}[t]
	\centering
	\begin{subfigure}{.45\textwidth}
		\centering
		\includegraphics[width=0.95\textwidth]{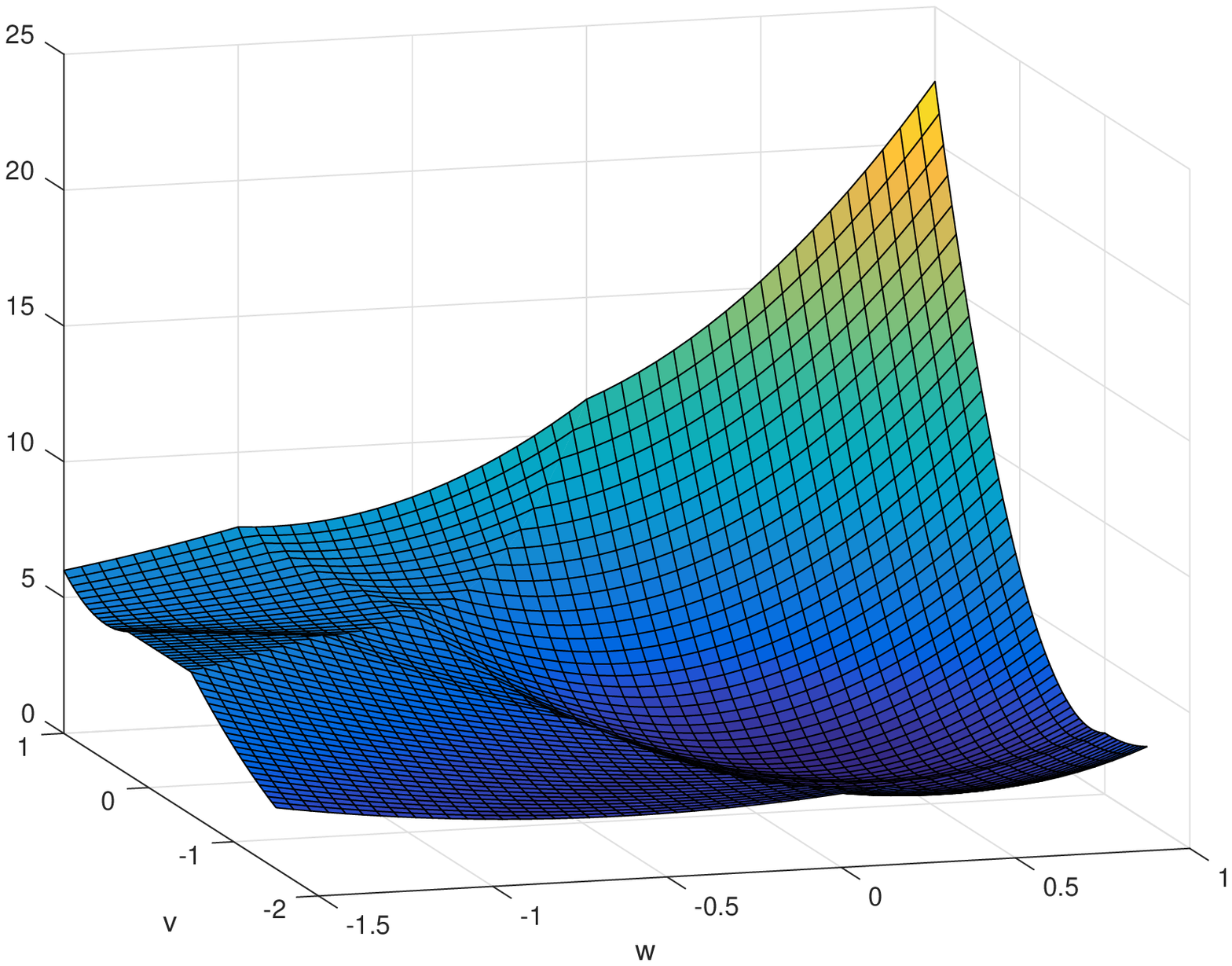}
		\caption{A 3-d surface plot of $f(w,v)$.}
		\label{fig:nondiff3d}
	\end{subfigure}
	\begin{subfigure}{.45\textwidth}
		\centering
		\includegraphics[width=0.95\linewidth]{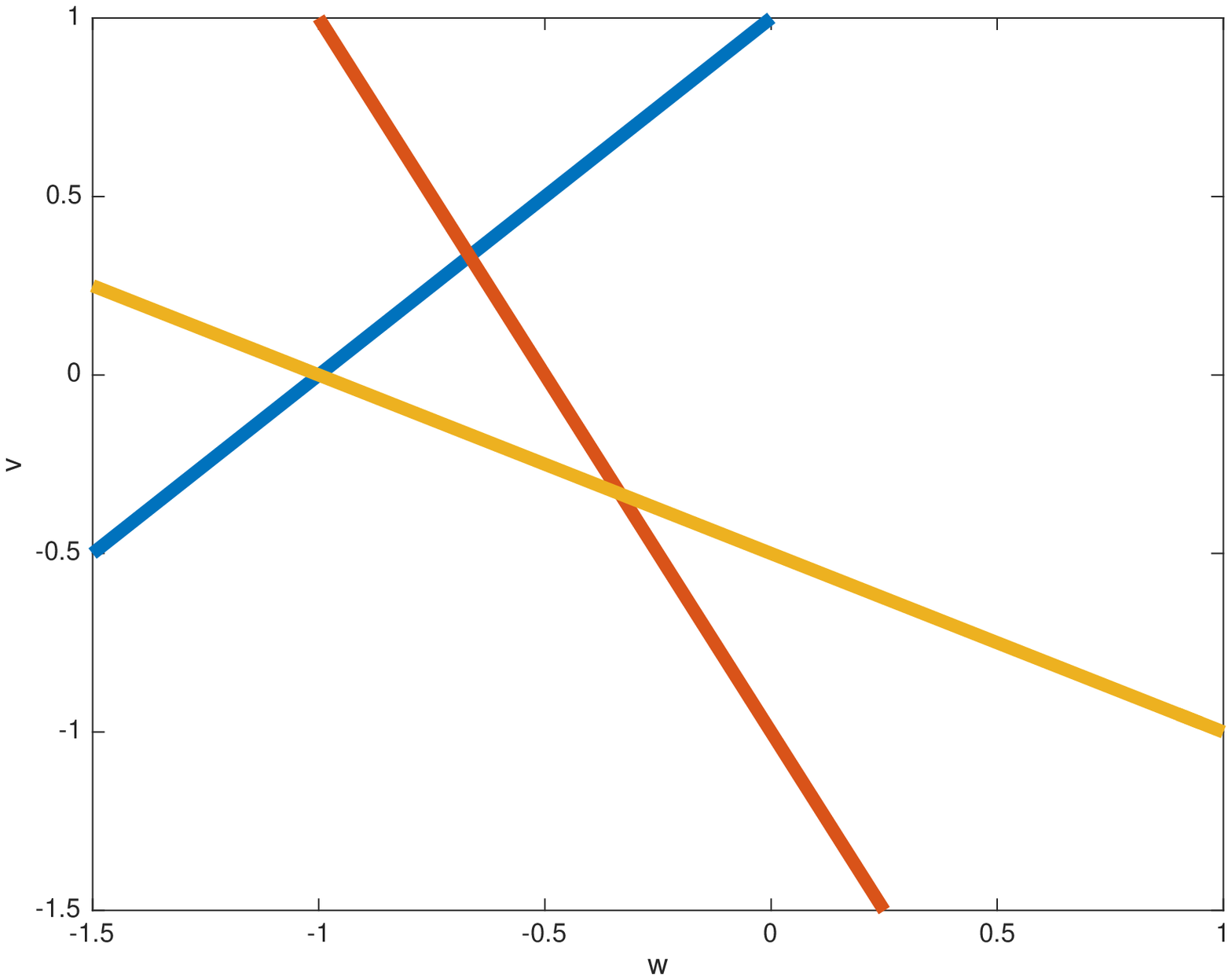}
		\caption{Nondifferentiable points on $(w,v)$ plane.}
		\label{fig:nondiff2d}
	\end{subfigure}
	\caption{An illustration of the empirical risk of a ReLU network. The plotted function is $f(w,v) \defeq (h(w-v+1)-2)^2+(h(2w+v+1)-1)^2+(h(w+2v+1)-0.5)^2$, where $h$ is the ReLU function. (a)~A 3-d surface plot of the function. One can see that there are sharp ridges in the function. (b) A plot of nondifferentiable points on the $(w,v)$ plane. The blue line correspond to the line $w-v+1 = 0$, the red to $2w+v+1 = 0$, and the yellow to $w+2v+1 = 0$.}
	\label{fig:nondiff}
\end{figure}

Such nondifferentiable points lie in a set of measure zero, so one may be tempted to overlook them as ``non-generic.'' However, when studying critical points we cannot do so, as they are precisely such ``non-generic'' points. 
For example, \citet{laurent2017multilinear} study 
one-hidden-layer ReLU networks with hinge loss and note that except for piecewise constant regions, local minima always occur on nonsmooth boundaries. Probably due to difficulty in analysis, there have not been other works that handle such nonsmooth points of losses and prove results that work for \emph{all} points. Some theorems \citep{soudry2016no, nguyen2017losscnn} hold ``almost surely''; some assume differentiability or make statements only for differentiable points \citep{nguyen2017loss, yun2019small}; others analyze population risk, in which case the nondifferentiability disappears after taking expectation \citep{tian2017analytical, brutzkus2017globally, du2017gradient, safran2017spurious, wu2018no}.

\vspace*{-5pt}
\subsection{Summary of our results}
\vspace*{-5pt}
In this paper, we take a step towards understanding nondifferentiable points of the empirical risk of one-hidden-layer ReLU(-like) networks. Specifically, we provide a theoretical algorithm that tests second-order stationarity for \emph{any} point of the loss surface.
It takes an input point and returns:
\begin{enumerate}[label=(\alph*)]
  \setlength{\itemsep}{0pt}
\item The point is a local minimum; or
\item The point is a second-order stationary point (SOSP); or
\item A descent direction in which the function value strictly decreases.
\end{enumerate}
Therefore, we can test whether a given point is a SOSP. If not, the test extracts a guaranteed direction of descent that helps continue minimization.
With a proper numerical implementation of our algorithm (although we leave it for future work), one can run a first-order method until it gets stuck near a point, and run our algorithm to test for optimality/second-order stationarity. If the point is an SOSP, we can terminate without further computation over many epochs; if the point has a descent direction, our algorithm will return a descent direction and we can continue on optimizing. Note that the descent direction may come from the second-order information; our algorithm even allows us to escape nonsmooth second-order saddle points. This idea of mixing first and second-order methods has been explored in differentiable problems (see, for example,  \citet{carmon2016accelerated,reddi2017generic} and references therein), but not for nondifferentiable ReLU networks.

The key computational challenge in constructing our algorithm for nondifferentiable points is posed by data points that causes input $0$ to the ReLU hidden node(s).
Such data point bisects the parameter space into two halfspaces with different ``slopes'' of the loss surface, so one runs into nondifferentiability. 
We define these data points to be \textbf{boundary data points}. For example, in Figure~\ref{fig:nondiff2d}, if the input to our algorithm is $(w,v) = (-2/3, 1/3)$, then there are two boundary data points: ``blue'' and ``red.''
If there are $M$ such boundary data points, then in the worst case the parameter space divides into $2^{M}$ regions, 
or equivalently, there are $2^M$ ``pieces'' of the function that surround the input point.
Of course, naively testing each region will be very inefficient; in our algorithm, we overcome this issue by a clever use of polyhedral geometry. Another challenge comes from the second-order test, which involves solving nonconvex QPs. Although QP is NP-hard in general~\citep{pardalos1991quadratic}, we prove that the QPs in our algorithm are still solved efficiently in most cases. 
We further describe the challenges and key ideas in Section~\ref{sec:keyc}.

\vspace*{-5pt}
\paragraph{Notation.} 
For a vector $v$, $\veccomp{v}{i}$ denotes its $i$-th component, and $\npsd{v}{H} \defeq \sqrt{v^T H v}$ denotes a semi-norm where $H$ is a positive semidefinite matrix.
Given a matrix $A$, we let $\matent{A}{i}{j}$, $\matrow{A}{i}$, and $\matcol{A}{j}$ be $A$'s $(i,j)$-th entry, the $i$-th row, and the $j$-th column, respectively. 


\vspace*{-5pt}
\section{Problem setting and key ideas}
\vspace*{-5pt}
We consider a one-hidden-layer neural network with input dimension $d_x$, hidden layer width $d_h$, and output dimension $d_y$. 
We are given $m$ pairs of data points and labels $(x_i, y_i)_{i=1}^m$, where $x_i \in \reals^{d_x}$ and $y_i \in \reals^{d_y}$.
Given an input vector $x$, the output of the network is defined as
$Y(x) \defeq W_2 h (W_1 x + b_1) + b_2,$ where $W_2 \in \reals^{d_y \times d_h}$, $b_2 \in \reals^{d_y}$, $W_1 \in \reals^{d_h \times d_x}$, and $b_1 \in \reals^{d_h}$ are the network parameters. The activation function $h$ is \emph{``ReLU-like,''} meaning 
$h(t) \defeq \max\{s_+ t,0\} + \min\{s_- t, 0\}$,
where $s_+ > 0, s_- \geq 0$ and $s_+ \neq s_-$. Note that ReLU and Leaky-ReLU are members of this class.
In training neural networks, we are interested in minimizing the empirical risk
\begin{equation*}
\emprisk( (W_j,b_j)_{j=1}^2 )
= \sum\nolimits_{i=1}^m \ell( Y(x_i), y_i )
= \sum\nolimits_{i=1}^m \ell( W_2 h(W_1 x_i + b_1) + b_2, y_i ),
\end{equation*}
over the parameters $(W_j,b_j)_{j=1}^2$, where $\ell(w,y) : \reals^{d_y} \times \reals^{d_y} \mapsto \reals$ is the loss function. 
We make the following assumptions on the loss function and the training dataset:
\begin{assumption}
	\label{asm:loss}
	The loss function $\ell(w,y)$ is twice differentiable and convex in $w$.
\end{assumption}
\begin{assumption}
	\label{asm:data}
	No $d_x+1$ data points lie on the same affine hyperplane.
\end{assumption}
\vspace{-5pt}
Assumption~\ref{asm:loss} is satisfied by many standard loss functions such as squared error loss and cross-entropy loss. Assumption~\ref{asm:data} means, if $d_x = 2$ for example, no three data points are on the same line. Since real-world datasets contain noise, this assumption is also quite mild.

\vspace*{-5pt}
\subsection{Challenges and key ideas}
\vspace*{-5pt}
\label{sec:keyc}
In this section, we explain the difficulties at nondifferentiable points and ideas on overcoming them. Our algorithm is built from first principles, rather than advanced tools from nonsmooth analysis.

\vspace*{-5pt}
\paragraph{Bisection by boundary data points.} 
Since the activation function $h$ is nondifferentiable at $0$, the behavior of data points at the ``boundary'' is decisive. Consider a simple example $d_h=1$, so $W_1$ is a row vector.
If $W_1 x_i + b_1 \neq 0$, then the sign of $(W_1+\Delta_1) x_i + (b_1+\delta_1)$ for any small perturbations $\Delta_1$ and $\delta_1$ stays invariant.
In contrast, when there is a point $x_i$ on the ``boundary,'' i.e., $W_1 x_i + b_1 = 0$, then the slope depends on the direction of perturbation, leading to nondifferentiability. As mentioned earlier, we refer to such data points as \textbf{boundary data points}.
When $\Delta_1 x_i + \delta_1 \geq 0$, 
\begin{equation*}
h((W_1+\Delta_1) x_i + (b_1+\delta_1)) = h(\Delta_1 x_i + \delta_1) = s_+ (\Delta_1 x_i + \delta_1) = h(W_1 x_i + b_1) + s_+ (\Delta_1 x_i + \delta_1),
\end{equation*}
and similarly, the slope is $s_-$ for $\Delta_1 x_i + \delta_1 \leq 0$. 
This means that the ``gradient'' (as well as higher order derivatives) of $\emprisk$ depends on \emph{direction} of $(\Delta_1, \delta_1)$.

Thus, every boundary data point $x_i$ bisects the space of perturbations $(\Delta_j,\delta_j)_{j=1}^2$ into two halfspaces by introducing a hyperplane through the origin.
The situation is even worse if we have $M$ boundary data points: they lead to a worst case of $2^{M}$ regions. Does it mean that we need to test all $2^M$ regions separately? We show that there is a way to get around this issue, but before that, we first describe how to test local minimality or stationarity for each region.

\vspace*{-5pt}
\paragraph{Second-order local optimality conditions.}
We can expand $\emprisk( (W_j+\Delta_j,b_j+\delta_j)_{j=1}^2 )$ and obtain the following Taylor-like expansion for small enough perturbations (see Lemma~\ref{lem:expansion} for details)
\begin{equation}
\label{eq:1}
\emprisk(z + \eta) = \emprisk(z) + g(z,\eta)^T \eta + \half \eta^T H(z,\eta) \eta + o(\norm{\eta}^2),
\end{equation}
where $z$ is a vectorized version of all parameters $(W_j,b_j)_{j=1}^2$ and $\eta$ is the corresponding vector of perturbations $(\Delta_j,\delta_j)_{j=1}^2$.
Notice now that in~\eqref{eq:1}, at nondifferentiable points the usual Taylor expansion does not exist, but the corresponding ``gradient'' $g(\cdot)$ and ``Hessian'' $H(\cdot)$ now depend on the \emph{direction} of perturbation $\eta$. Also, the space of $\eta$ is divided into at most $2^M$ regions, and $g(z,\eta)$ and $H(z,\eta)$ are piecewise-constant functions of $\eta$ whose ``pieces'' correspond to the regions.
One could view this problem as $2^M$ constrained optimization problems and try to solve for KKT conditions at $z$; however, we provide an approach that is developed from first principles and solves all $2^M$ problems efficiently.

Given this expansion \eqref{eq:1} and the observation that derivatives stay invariant with respect to scaling of $\eta$, one can note that 
(a) $g(z,\eta)^T \eta \geq 0$ for all $\eta$, and (b) $\eta^T H(z,\eta) \eta \geq 0$ for all $\eta$ such that $g(z,\eta)^T \eta = 0$
are necessary conditions for local optimality of $z$, thus $z$ is a ``SOSP'' (see Definition~\ref{def:sosp}). The conditions become sufficient if (b) is replaced with
$\eta^T H(z,\eta) \eta > 0$ for all $\eta \neq \zeros{}$ such that $g(z,\eta)^T \eta = 0$.
In fact, this is a generalized version of second-order necessary (or sufficient) conditions, i.e., $\nabla f = \zeros{}$ and $\nabla^2 f \succeq \zeros{}$ (or $\nabla^2 f \succ \zeros{}$), for twice differentiable $f$.

\vspace*{-5pt}
\paragraph{Efficiently testing SOSP for exponentially many regions.} 
Motivated from the second-order expansion \eqref{eq:1} and necessary/sufficient conditions,
our algorithm consists of three steps: 
\begin{enumerate}[label=(\alph*)]
\setlength{\itemsep}{0pt}
	\item \label{item:algstep1}Testing first-order stationarity (in the Clarke sense, see Definition~\ref{def:clarke}),
	\item \label{item:algstep2}Testing $g(z,\eta)^T \eta \geq 0$ for all $\eta$,
	\item \label{item:algstep3}Testing $\eta^T H(z,\eta) \eta \geq 0$ for $\{ \eta \mid g(z,\eta)^T \eta = 0 \}$.
\end{enumerate}
The tests are executed from Step~\ref{item:algstep1} to \ref{item:algstep3}. Whenever a test fails, we get a strict descent direction $\eta$, and the algorithm returns $\eta$ and terminates. Below, we briefly outline each step and discuss how we can efficiently perform the tests.
We first check first-order stationarity because it makes Step~\ref{item:algstep2} easier. Step~\ref{item:algstep1} is done by solving one convex QP per each hidden node. For Step~\ref{item:algstep2}, we formulate linear programs (LPs) per each $2^M$ region, so that checking whether all LPs have minimum cost of zero is equivalent to checking $g(z,\eta)^T \eta \geq 0$ for all $\eta$. Here, the feasible sets of LPs are pointed polyhedral cones, whereby it suffices to check only the extreme rays of the cones. It turns out that there are only $2M$ extreme rays, each shared by $2^{M-1}$ cones, so testing $g(z,\eta)^T \eta \geq 0$ can be done with only $O(M)$ inequality/equality tests instead of solving exponentially many LPs. In Step~\ref{item:algstep2}, we also record the \textbf{flat extreme rays}, which are defined to be the extreme rays with $g(z,\eta)^T \eta = 0$, for later use in Step~\ref{item:algstep3}.

In Step~\ref{item:algstep3}, we test if the second-order perturbation $\eta^T H(\cdot) \eta$ can be negative, for directions where $g(z,\eta)^T \eta = 0$. Due to the constraint $g(z,\eta)^T \eta = 0$, the second-order test requires solving constrained nonconvex QPs.
In case where there is no flat extreme ray, we need to solve only one equality constrained QP (ECQP). If there exist flat extreme rays, a few more inequality constrained QPs (ICQPs) are solved.
Despite NP-hardness of general QPs \citep{pardalos1991quadratic}, we prove that the specific form of QPs in our algorithm are still tractable in most cases.
More specifically, we prove that projected gradient descent on ECQPs converges/diverges exponentially fast, and each step takes $O(p^2)$ time ($p$ is the number of parameters).
In case of ICQPs, it takes $O(p^3 + L^3 2^L)$ time to solve the QP, where $L$ is the number of boundary data points that have flat extreme rays ($L \leq M$).
Here, we can see that if $L$ is small enough, the ICQP can still be solved in polynomial time in $p$.
At the end of the paper, we provide empirical evidences that the number of flat extreme rays is zero or very few, meaning that in most cases we can solve the QP efficiently.


%

\vspace*{-4pt}
\subsection{Problem-specific notation and definition}
\vspace*{-4pt}

In this section, we define a more precise notion of generalized stationary points and introduce
some additional symbols that will be helpful in streamlining the description of our algorithm in Section~\ref{sec:algo}. 
Since we are dealing with nondifferentiable points of nonconvex $\emprisk$, usual notions of (sub)gradients do not work anymore.
Here, \emph{Clarke subdifferential} is a useful generalization \citep{clarke2008nonsmooth}:
\begin{definition}[FOSP, Theorem~6.2.5 of \citet{borwein2010convex}]
	\label{def:clarke}
	Suppose that a function $f(z) : \Omega \mapsto \reals$ is locally Lipschitz around the point $z^* \in \Omega$, and differentiable in $\Omega \setminus \mc W$ where $\mc W$ has Lebesgue measure zero. Then the Clarke differential of $f$ at $z^*$ is 
	\begin{equation*}
	\partial_z f(z^*) \defeq \textup{cvxhull} \{ \lim\nolimits_k \nabla f(z_k) \mid z_k \rightarrow z^*, z_k \notin \mc W\}.
	\end{equation*}
	If $\zeros{} \in \partial_z f(z^*)$, we say $z^*$ is a first-order stationary point (FOSP).
\end{definition}
From the definition, we can note that Clarke subdifferential $\partial_z \emprisk(z^*)$ is the convex hull of all the possible values of $g(z^*,\eta)$ in \eqref{eq:1}.
For parameters $(W_j, b_j)_{j=1}^2$, let $\partial_{W_j} f(z^*)$ and $\partial_{b_j} f(z^*)$ be the Clarke differential w.r.t.\ to $W_j$ and $b_j$, respectively. They are the projection of $\partial_z f(z^*)$ onto the space of individual parameters. Whenever the point $z^*$ is clear (e.g. our algorithm), we will omit $(z^*)$ from $f(z^*)$.
Next, we define second-order stationary points for the empirical risk $\emprisk$. Notice that this generalizes the definition of SOSP for differentiable functions $f$: $\nabla f = \zeros{}$ and $\nabla^2 f \succeq \zeros{}$.
\begin{definition}[SOSP]
	\label{def:sosp}
	We call $z^*$ is a second-order stationary point (SOSP) of $\emprisk$ if (1) $z^*$ is a FOSP, (2) $g(z^*,\eta)^T \eta \geq 0$ for all $\eta$, and (3) $\eta^T H(z^*,\eta) \eta \geq 0$ for all $\eta$ such that $g(z^*,\eta)^T \eta = 0$.
\end{definition}

Given an input data point $x \in \reals^{d_x}$, we define $O(x) \defeq h(W_1 x + b_1)$ to be the output of hidden layer. We note that the notation $O(\cdot)$ is overloaded with the big O notation, but their meaning will be clear from the context. Consider perturbing parameters $(W_j, b_j)_{j=1}^2$ with $(\Delta_j, \delta_j)_{j=1}^2$, then the perturbed output $\tilde Y(x)$ of the network and the amount of perturbation $\pert Y(x)$ can be expressed as
\begin{align*}
\pert Y(x) &\defeq \tilde Y(x) - Y(x) = \Delta_2 O(x) + \delta_2 + (W_2 + \Delta_2) J(x)(\Delta_1 x + \delta_1),
\end{align*}
where $J(x)$ can be thought informally as the ``Jacobian'' matrix of the hidden layer.
The matrix $J(x) \in \reals^{d_h \times d_h}$ is diagonal, and its $k$-th diagonal entry is given by
\begin{equation*}
\matent{J(x)}{k}{k} \defeq 
\begin{cases}
h'(\veccomp{W_1 x + b_1}{k}) & \text{ if } \veccomp{W_1 x + b_1}{k} \neq 0\\
h'(\veccomp{\Delta_1 x + \delta_1}{k}) & \text{ if } \veccomp{W_1 x + b_1}{k} = 0,
\end{cases}
\end{equation*}
where $h'$ is the derivative of $h$. We define $h'(0) \defeq s_+$, which is okay because it is always multiplied with zero in our algorithm.
For boundary data points, $\matent{J(x)}{k}{k}$ \emph{depends} on the direction of perturbations $\matrow{\Delta_1~~\delta_1}{k}$,
as noted in Section~\ref{sec:keyc}.
We additionally define $\pert Y_1(x)$ and $\pert Y_2(x)$ to separate the terms in $\pert Y(x)$ that are linear in perturbations versus quadratic in perturbations.
\begin{equation*}
\pert Y_1(x) \defeq \Delta_2 O(x) + \delta_2 + W_2 J(x) (\Delta_1 x + \delta_1),~~
\pert Y_2(x) \defeq \Delta_2 J(x) (\Delta_1 x + \delta_1).
\end{equation*}
For simplicity of notation for the rest of the paper, we define for all $i \in [m] \defeq \{1, \dots, m\}$,
\begin{equation*}
  \bar x_i \defeq \begin{bmatrix} x_i^T & 1 \end{bmatrix}^T \in \reals^{d_x + 1},~~
  \nabla \ell_i \defeq \nabla_w \ell(Y(x_i), y_i),~~
  \nabla^2 \ell_i \defeq \nabla^2_w \ell(Y(x_i), y_i).
\end{equation*}
In our algorithm and its analysis, we need to give a special treatment to the boundary data points. To this end, for each node $k \in [d_h]$ in the hidden layer, define \emph{boundary index set} $\indsetz{k}$ as
\begin{align*}
	\indsetz{k} \defeq \left \{ i \in [m] \mid \veccomp{W_1 x_i + b_1}{k} = 0 \right \}.
\end{align*}
The subspace spanned by vectors $\bar x_i$ for in $i \in \indsetz{k}$ plays an important role in our tests; so let us define a symbol for it, as well as the cardinality of $\indsetz{k}$ and their sum:
\begin{align*}
\mc V_k \defeq \spann \{ \bar x_i \mid i \in \indsetz{k} \},~~
M_k \defeq |\indsetz{k}|,~~
M \defeq \sum\nolimits_{k=1}^{d_h} M_k.
\end{align*}
For $k \in [d_h]$, let $v_k^T \in \reals^{1 \times (d_x+1)}$ be the $k$-th row of $\begin{bmatrix} \Delta_1 & \delta_1 \end{bmatrix}$, and $u_k \in \reals^{d_y}$ be the $k$-th column of $\Delta_2$.
Next, we define the total number of parameters $p$, and vectorized perturbations $\eta \in \reals^{p}$:
\begin{equation*}
p \defeq d_y + d_y d_h + d_h (d_x + 1),~~
	\eta^T \defeq 
	\begin{bmatrix}
		\delta_2^T&
		u_1^T&
		\cdots&
		u_{d_h}^T&
		v_1^T&
		\cdots&
		v_{d_h}^T
	\end{bmatrix}.
\end{equation*} 
Also let $z \in \reals^{p}$ be vectorized parameters $(W_j, b_j)_{j=1}^2$, packed in the same order as $\eta$. 

Define a matrix 
$\amatpm{k} \defeq \sum\nolimits_{i \notin \indsetz{k}}
h'( \veccomp{W_1 x_i + b_1}{k} )
\nabla \ell_i \bar x_i^T 
\in \reals^{d_y \times (d_x + 1)}$.
This quantity appears multiplie times and does not depend on the perturbation, so it is helpful to have a symbol for it.

We conclude this section by presenting one of the implications of Assumption~\ref{asm:data} in the following lemma, which we will use later. The proof is simple, and is presented in Appendix~\ref{sec:pflemdatapoints}.
\begin{lemma}
	\label{lem:datapoints}
	If Assumption~\ref{asm:data} holds, then $M_k \leq d_x$ and the vectors $\{\bar x_i\}_{i\in B_k}$ are linearly independent. 
\end{lemma}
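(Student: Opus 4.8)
The plan is to reinterpret the boundary index set $\indsetz{k}$ geometrically and then read off both claims from Assumption~\ref{asm:data}. The defining condition $\veccomp{W_1 x_i + b_1}{k} = 0$ for $i \in \indsetz{k}$ is exactly $\matrow{W_1}{k} x_i + \veccomp{b_1}{k} = 0$, i.e.\ $a^T \bar x_i = 0$ with $a \defeq \begin{bmatrix} \matrow{W_1}{k}^T & \veccomp{b_1}{k} \end{bmatrix}^T \in \reals^{d_x+1}$. In the relevant case $\matrow{W_1}{k} \neq 0$, the set $\{x \in \reals^{d_x} : \matrow{W_1}{k} x + \veccomp{b_1}{k} = 0\}$ is a bona fide affine hyperplane on which every data point indexed by $\indsetz{k}$ lies. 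Assumption~\ref{asm:data} says at most $d_x$ data points lie on any affine hyperplane, which immediately gives $M_k = |\indsetz{k}| \leq d_x$.

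For the independence claim I would first record the standard equivalence that $\{\bar x_i\}_{i \in \indsetz{k}}$ is linearly independent if and only if $\{x_i\}_{i \in \indsetz{k}}$ is affinely independent: any nontrivial relation $\sum_{i \in \indsetz{k}} c_i \bar x_i = 0$ splits into $\sum_i c_i x_i = 0$ and $\sum_i c_i = 0$, which is precisely an affine dependence among the $x_i$. So it suffices to rule out affine dependence, and I would argue by contradiction. Suppose the $M_k \leq d_x$ points $\{x_i\}_{i\in\indsetz{k}}$ are affinely dependent, so their affine hull has dimension at most $M_k - 2$. Choosing any $d_x + 1 - M_k$ further data points outside $\indsetz{k}$ (which exist in the regime $m \geq d_x+1$, the only regime in which Assumption~\ref{asm:data} has content), the affine hull of the resulting collection of $d_x+1$ points has dimension at most $(M_k - 2) + (d_x + 1 - M_k) = d_x - 1$, since appending each point raises the affine-hull dimension by at most one. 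Hence these $d_x + 1$ data points lie on a common affine hyperplane, contradicting Assumption~\ref{asm:data}.

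The genuinely load-bearing step is this last dimension-counting argument, and the main subtlety is that the hyperplane containing the $\indsetz{k}$-points and the hyperplane produced by the contradiction need not coincide: one cannot simply extend \emph{within} the boundary hyperplane $\{x : \matrow{W_1}{k} x + \veccomp{b_1}{k} = 0\}$, because the arbitrary extra data points need not lie on it. The fix is to enlarge the affine hull abstractly and invoke only the bound ``appending $t$ points increases affine-hull dimension by at most $t$,'' so that the contradicting hyperplane is merely some hyperplane containing the enlarged hull. A secondary point to handle cleanly is the degenerate case $\matrow{W_1}{k} = 0$, where $\indsetz{k}$ does not cut out an affine hyperplane of $\reals^{d_x}$ (it is empty when $\veccomp{b_1}{k}\neq 0$ and all of $[m]$ when $\veccomp{b_1}{k}=0$); the hyperplane argument does not apply verbatim there, so I would flag and dispatch this case separately rather than let it hide inside the phrase ``affine hyperplane.''
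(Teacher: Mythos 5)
Your proposal is correct and follows essentially the same route as the paper's own proof: the bound $M_k \leq d_x$ comes from observing that the boundary points lie on the affine hyperplane $\matrow{W_1}{k}x + \veccomp{b_1}{k} = 0$, and linear independence is proved by contradiction, converting a linear dependence of the $\bar x_i$ into an affine dependence of the $x_i$ and then padding with $d_x+1-M_k$ extra data points to place $d_x+1$ points on a common hyperplane, violating Assumption~\ref{asm:data}. If anything you are more careful than the paper, which silently assumes $\matrow{W_1}{k} \neq 0$ (so that the boundary set is a genuine hyperplane) and $m \geq d_x+1$ (so that the extra points exist); both are edge cases you correctly flag.
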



\vspace*{-6pt}
\section{Test algorithm for second-order stationarity}
\label{sec:algo}
\vspace*{-4pt}
In this section, we present $\algmain$ in Algorithm~\ref{alg:roughtest}, which takes an arbitrary tuple $(W_j,b_j)_{j=1}^2$ of parameters as input and checks whether it is a SOSP.
We first present a lemma that shows the explicit form of the perturbed empirical risk $\emprisk(z + \eta)$ and identify first and second-order perturbations. The proof is deferred to Appendix~\ref{sec:pflemexpansion}.
\begin{lemma}
\label{lem:expansion}
For small enough perturbation $\eta$,
\begin{equation*}
\emprisk(z + \eta) = \emprisk(z) + g(z,\eta)^T \eta + \half \eta^T H(z,\eta) \eta + o(\norm{\eta}^2),
\end{equation*}
where $g(z,\eta)$ and $H(z,\eta)$ satisfy
\begin{align*}
g(z,\eta)^T \eta &= 
\sum\nolimits_{i} \nabla \ell_i^T \pert Y_1(x_i) =
\dotprod{\sum\nolimits_{i} \nabla \ell_i O(x_i)^T}{\Delta_2} + 
\dotprod{\sum\nolimits_{i} \nabla \ell_i}{\delta_2} + 
\sum_{k=1}^{d_h}
g_k(z,v_k)^T
v_k,\\
\eta^T H(z,\eta) \eta &=
\sum\nolimits_{i} \nabla \ell_i^T \pert Y_2(x_i) 
+ \half \sum\nolimits_{i}  \npsd{\pert Y_1(x_i)}{\nabla^2 \ell_i}^2,
\end{align*}
and 
$g_k(z,v_k)^T \defeq \matcol{W_2}{k}^T
\left (
C_k + \sum\nolimits_{i \in B_k}
h'(\bar x_i ^T v_k)
\nabla \ell_i \bar x_i^T 
\right )$.
Also, $g(z,\eta)$ and $H(z,\eta)$ are piece-wise constant functions of $\eta$, which are constant inside each polyhedral cone in space of $\eta$.
\end{lemma}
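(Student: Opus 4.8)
The plan is to evaluate the perturbed risk summand-by-summand, first deriving an \emph{exact} (not merely first-order) formula for the perturbed hidden layer, and only then Taylor-expanding the twice differentiable loss and sorting terms by order in $\norm{\eta}$. The crux is the first step: showing that for small enough $\norm{\eta}$ the identity $h((W_1+\Delta_1)x_i+(b_1+\delta_1)) = O(x_i) + J(x_i)(\Delta_1 x_i + \delta_1)$ holds \emph{exactly}. For a non-boundary node ($i\notin\indsetz{k}$) the preactivation $\veccomp{W_1 x_i+b_1}{k}$ is nonzero; since there are finitely many such nonzero preactivations they are bounded away from $0$, so I can choose $\norm{\eta}$ \emph{uniformly} small enough that each keeps its sign, and writing $a=\veccomp{W_1 x_i+b_1}{k}$, $t=\veccomp{\Delta_1 x_i+\delta_1}{k}$, linearity of $h$ on each half-line gives $h(a+t)=h(a)+h'(a)\,t$ exactly. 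For a boundary node ($i\in\indsetz{k}$) the base preactivation is $0$, and positive homogeneity $h(t)=h'(t)\,t$ reproduces the boundary branch of $J(x_i)$ exactly, exhibiting the direction dependence. Substituting into $\tilde Y(x_i)=(W_2+\Delta_2)\big(O(x_i)+J(x_i)(\Delta_1 x_i+\delta_1)\big)+b_2+\delta_2$ and subtracting $Y(x_i)$ recovers $\pert Y(x_i)=\pert Y_1(x_i)+\pert Y_2(x_i)$, where on each fixed sign pattern $\pert Y_1$ is homogeneous of degree $1$ (so $\pert Y_1(x_i)=O(\norm{\eta})$) and $\pert Y_2=\Delta_2 J(x_i)(\Delta_1 x_i+\delta_1)$ is homogeneous of degree $2$ (so $\pert Y_2(x_i)=O(\norm{\eta}^2)$).

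Next I would Taylor-expand each $\ell(Y(x_i)+\pert Y(x_i),y_i)$ about $Y(x_i)$ using Assumption~\ref{asm:loss} and substitute $\pert Y=\pert Y_1+\pert Y_2$, sorting the result by order in $\norm{\eta}$. The unique $O(\norm{\eta})$ contribution is $\sum_i\nabla\ell_i^T\pert Y_1(x_i)=g(z,\eta)^T\eta$. The $O(\norm{\eta}^2)$ contribution is $\sum_i\nabla\ell_i^T\pert Y_2(x_i)+\half\sum_i\npsd{\pert Y_1(x_i)}{\nabla^2\ell_i}^2$, i.e.\ the gradient term applied to the quadratic $\pert Y_2$ plus the Hessian term applied to $\pert Y_1$; this is exactly the expression the statement names $\eta^T H(z,\eta)\eta$. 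Everything else collapses into $o(\norm{\eta}^2)$: the cross term $\pert Y_1^T\nabla^2\ell_i\pert Y_2=O(\norm{\eta}^3)$, the term $\pert Y_2^T\nabla^2\ell_i\pert Y_2=O(\norm{\eta}^4)$, and the Taylor remainder $o(\norm{\pert Y(x_i)}^2)=o(\norm{\eta}^2)$ (using $\pert Y(x_i)=O(\norm{\eta})$).

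To obtain the per-node form of $g$, I would expand $\sum_i\nabla\ell_i^T\pert Y_1(x_i)$ using $\pert Y_1(x)=\Delta_2 O(x)+\delta_2+W_2 J(x)(\Delta_1 x+\delta_1)$. Rewriting $\nabla\ell_i^T\Delta_2 O(x_i)$ as a Frobenius inner product gives the $\Delta_2$ and $\delta_2$ terms, and since $J(x_i)$ is diagonal with $\veccomp{\Delta_1 x_i+\delta_1}{k}=\bar x_i^T v_k$, the remaining term factors over nodes as $\sum_k\matcol{W_2}{k}^T\big(\sum_i\matent{J(x_i)}{k}{k}\nabla\ell_i\bar x_i^T\big)v_k$; splitting the inner sum into $i\notin\indsetz{k}$ (where $\matent{J(x_i)}{k}{k}=h'(\veccomp{W_1 x_i+b_1}{k})$, summing to $\amatpm{k}$) and $i\in\indsetz{k}$ (where $\matent{J(x_i)}{k}{k}=h'(\bar x_i^T v_k)$) reproduces $g_k(z,v_k)^T v_k$. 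Piecewise constancy then follows because $g$ and $H$ depend on $\eta$ only through the boundary factors $h'(\bar x_i^T v_k)$, each constant on either side of the origin-hyperplane $\{\bar x_i^T v_k=0\}$; the $M$ such hyperplanes cut $\reals^p$ into polyhedral cones on whose interiors $g$ and $H$ are constant. I expect the first step to be the main obstacle: the exactness of the hidden-layer identity together with the uniform choice of ``small enough'' $\norm{\eta}$ is precisely what makes the subsequent order accounting clean, and everything after it is essentially bookkeeping.
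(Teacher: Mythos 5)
Your proposal is correct and follows essentially the same route as the paper: Taylor-expand each $\ell(\cdot,y_i)$ at $Y(x_i)$, split $\pert Y = \pert Y_1 + \pert Y_2$ by homogeneity degree, absorb the cross and higher-order terms into $o(\norm{\eta}^2)$, and factor the linear term over hidden nodes to get $g_k$. The only difference is that you explicitly justify the exact identity $h((W_1+\Delta_1)x_i+(b_1+\delta_1)) = O(x_i)+J(x_i)(\Delta_1 x_i+\delta_1)$ for uniformly small $\eta$, which the paper leaves implicit in its definition of $\pert Y(x)$; this is a worthwhile clarification but not a different argument.
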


Rough pseudocode of $\algmain$ is presented in Algorithm~\ref{alg:roughtest}. 
As described in Section~\ref{sec:keyc}, the algorithm consists of three steps:
(a) testing first-order stationarity (b) testing $g(z,\eta)^T \eta \geq 0$ for all $\eta$, and (c) testing $\eta^T H(z,\eta) \eta \geq 0$ for $\{ \eta \mid g(z,\eta)^T \eta = 0 \}$.
If the input point satisfies the second-order sufficient conditions for local minimality, the algorithm decides it is a local minimum. If the point only satisfies second-order necessary conditions, it returns SOSP. If a strict descent direction $\eta$ is found, the algorithm terminates immediately and returns $\eta$.
A brief description will follow, but the full algorithm (Algorithm~\ref{alg:test}) and a full proof of correctness are deferred to Appendix~\ref{sec:proof}.

\begin{algorithm}[t]\small
	\caption{$\algmain$ (Rough pseudocode)} \label{alg:roughtest}
	\begin{algorithmic}[1]
		\Statex Input: A tuple $(W_j,b_j)_{j=1}^2$ of $\emprisk(\cdot)$. 
		\State \label{line:rp-l2-fosp}Test if $\partial_{W_2} \emprisk = \{\zeros{d_y \times d_h}\}$ and $\partial_{b_2} \emprisk = \{\zeros{d_y}\}$.
		\For {$k \in [d_h]$}
		\If {$M_k > 0$}
			\State \label{line:rp-l1-fosp}Test if $\zeros{d_x+1}^T \in \partial_{\matrow{W_1~b_1}{k}} \emprisk$.
			\State \label{line:rp-inc-st}Test if $g_k(z,v_k)^T v_k \geq 0$ for all $v_k$ via testing extreme rays $\tilde v_k$ of polyhedral cones. 
			\State \label{line:rp-inc-ed}Store extreme rays $\tilde v_k$ s.t.\ $g_k(z,\tilde v_k)^T \tilde v_k = 0$ for second-order test. 
		\Else{} 
			\State \label{line:rp-l1-fosp-nobdp}Test if $\partial_{\matrow{W_1~b_1}{k}} \emprisk = \{\zeros{d_x+1}^T\}$.
		\EndIf
		\EndFor
		\State \label{line:rp-so-st}For all $\eta$'s s.t.\ $g(z,\eta)^T \eta = 0$, test if $\eta^T H(z,\eta) \eta \geq 0$. 
		\If{$\exists \eta \neq \zeros{} \text{ s.t. } g(z,\eta)^T \eta = 0 \text{ and } \eta^T H(z,\eta) \eta = 0$}
			\State \Return SOSP.
		\Else
			\State \Return Local Minimum.
		\EndIf \label{line:rp-so-ed}
	\end{algorithmic}
\end{algorithm}

\vspace*{-6pt}
\subsection{Testing first-order stationarity (lines~\ref{line:rp-l2-fosp}, \ref{line:rp-l1-fosp}, and \ref{line:rp-l1-fosp-nobdp})}
\vspace*{-5pt}
Line~\ref{line:rp-l2-fosp} of Algorithm~\ref{alg:roughtest} corresponds to testing if $\partial_{W_2} \emprisk$ and $\partial_{b_2} \emprisk$ are singletons with zero.
If not, the opposite direction is a descent direction. More details are in Appendix~\ref{sec:proof-l2-fosp}.

Test for $W_1$ and $b_1$ is more difficult because $g(z,\eta)$ depends on $\Delta_1$ and $\delta_1$ when there are boundary data points.
For each $k \in [d_h]$, Line~\ref{line:rp-l1-fosp} (if $M_k > 0$), and Line~\ref{line:rp-l1-fosp-nobdp} (if $M_k = 0$) test if $\zeros{d_x+1}^T$ is in 
$\partial_{\matrow{W_1~b_1}{k}} \emprisk$.
Note from Definition~\ref{def:clarke} and Lemma~\ref{lem:expansion} that $\partial_{\matrow{W_1~b_1}{k}} \emprisk$ is the convex hull of all possible values of $g_k(z, v_k)^T$.
If $M_k > 0$, $\zeros{} \in \partial_{\matrow{W_1~b_1}{k}} \emprisk$ can be tested by solving a convex QP:
\begin{equation}
\label{eq:algopt1}
\begin{array}{ll}
\minimize_{ \{s_i\}_{i \in \indsetz{k}}} & 
\ltwos{\matcol{W_2}{k}^T 
	(
	\amatpm{k}
	+
	\sum\nolimits_{i \in \indsetz{k}}
	s_i
	\nabla \ell_i \bar x_i^T
	)}^2
\\
\subjectto & 
\min\{s_-,s_+\} \leq s_i \leq \max \{s_-,s_+\},~~ \forall i \in \indsetz{k}.
\end{array}
\end{equation}
If the solution $\{s_i^*\}_{i\in \indsetz{k}}$ does not achieve zero objective value, then we can directly return a descent direction.
For details please refer to $\algfosubdif$ (Algorithm~\ref{alg:fo-sb-test}) and Appendix~\ref{sec:proof-l1-fosp}.

\vspace*{-3pt}
\subsection{Testing $g(z,\eta)^T \eta \geq 0$ for all $\eta$ (lines~\ref{line:rp-inc-st}--\ref{line:rp-inc-ed})}
\vspace*{-3pt}
\paragraph{Linear program formulation.}
Lines~\ref{line:rp-inc-st}--\ref{line:rp-inc-ed} are about testing if $g_k(z,v_k)^T v_k \geq 0$ for all directions of $v_k$. 
If $\zeros{d_x+1}^T \in \partial_{\matrow{W_1~b_1}{k}} \emprisk$, with the solution $\{s_i^*\}$ from QP~\eqref{eq:algopt1} we can write $g_k(z,v_k)^T$ as
\begin{align*}
g_k(z,v_k)^T
\!=\!
\matcol{W_2}{k}^T
\left (
\amatpm{k}
+
\sum\nolimits_{i \in \indsetz{k}}
h'( \bar x_i^T v_k )
\nabla \ell_i \bar x_i^T
\right )
\!=\!
\matcol{W_2}{k}^T
\left (
\sum\nolimits_{i \in \indsetz{k}}
\Big (h'( \bar x_i^T v_k ) - s_i^* \Big )
\nabla \ell_i \bar x_i^T
\right ).
\end{align*}
Every $i \in \indsetz{k}$ bisects $\reals^{d_x+1}$ into two halfspaces, $\bar x_i^T v_k \geq 0$ and $\bar x_i^T v_k \leq 0$, in each of which $h'( \bar x_i^T v_k )$ stays constant.
Note that by Lemma~\ref{lem:datapoints}, $\bar x_i$'s for $i \in B_k$ are linearly independent.
So, given $M_k$ boundary data points, they divide the space $\reals^{d_x+1}$ of $v_k$ into $2^{M_k}$ polyhedral cones.

Since $g_k(z,v_k)^T$ is constant in each polyhedral cones, we can 
let $\sigma_{i} \in \{-1,+1\}$ for all $i \in \indsetz{k}$, and define an LP for each $\{\sigma_i\}_{i \in B_k} \in \{-1,+1\}^{M_k}$:
\begin{equation}
\label{eq:LP2}
\begin{array}{ll}
\minimize\limits_{v_k} & 
\matcol{W_2}{k}^T
\left ( 
\sum_{i \in \indsetz{k}}
(s_{\sigma_i} - s_i^*)
\nabla \ell_i
\bar x_i^T
\right )
v_k
\\
\subjectto & 
v_k \in \mc V_k, ~~~
\sigma_i \bar x_i^T v_k \geq 0,
~\forall i \in \indsetz{k}.
\end{array}
\end{equation}
Solving these LPs and checking if the minimum value is $0$ suffices to prove $g_k(z,v_k)^T v_k \geq 0$
for all small enough perturbations.
The constraint $v_k \in \mc V_k$ is there because any $v_k \notin \mc V_k$ is also orthogonal to $g_k(z,v_k)$. It is equivalent to $d_x + 1 - M_k$ linearly independent equality constraints.
So, the feasible set of LP~\eqref{eq:LP2} has $d_x+1$ linearly independent constraints, which implies that the feasible set is a pointed polyhedral cone with vertex at origin.
Since any point in a pointed polyhedral cone is a conical combination (linear combination with nonnegative coefficients) of \emph{extreme rays} of the cone, checking nonnegativity of the objective function for all extreme rays  suffices. We emphasize that we \emph{do not} solve the LPs~\eqref{eq:LP2} in our algorithm; we just check the extreme rays.

\vspace*{-4pt}
\paragraph{Computational efficiency.}
Extreme rays of a pointed polyhedral cone in $\reals^{d_x+1}$ are computed from $d_x$ linearly independent active constraints.
For each $i \in B_k$, the extreme ray $\hat v_{i,k} \in \mc V_k \cap \spann \{\bar x_j \mid j \in B_k \setminus \{i\}\}^\perp$ must be tested whether $g_k(z, \hat v_{i,k})^T \hat v_{i,k} \geq 0$, in both directions.
Note that there are $2 M_k$ extreme rays, and one extreme ray $\hat v_{i,k}$ is shared by $2^{M_k - 1}$ polyhedral cones. Moreover, $\bar x_j^T \hat v_{i,k} = 0$ for $j \in B_k \setminus \{i\}$, 
which indicates that
\begin{equation*}
g_k(z, \hat v_{i,k})^T \hat v_{i,k}
=
(s_{\sigma_{i,k}} - s_i^*)
\matcol{W_2}{k}^T
\nabla \ell_i
\bar x_i^T
\hat v_{i,k},
\text{ where }
\sigma_{i,k} = \sign(\bar x_i^T \hat v_{i,k}),
\end{equation*}
regardless of $\{\sigma_j\}_{j \in B_k \setminus \{i\}}$.
Testing an extreme ray can be done with a \emph{single} inequality test instead of $2^{M_k - 1}$ separate tests for all cones!
Thus, this extreme ray approach instead of solving individual LPs greatly reduces computation, from $O(2^{M_k})$ to $O(M_k)$.

\vspace*{-4pt}
\paragraph{Testing extreme rays.}
\label{prg:testextray}
For the details of testing all possible extreme rays, please refer to $\algfoinc$ (Algorithm~\ref{alg:fo-inc-test}) and Appendix~\ref{prg:proof-inc-alg}. $\algfoinc$ computes all possible extreme rays $\tilde v_k$ and tests if they satisfy $g_k(z,\tilde v_k)^T \tilde v_k \geq 0$.
If the inequality is not satisfied by an extreme ray $\tilde v_k$, then this is a descent direction, so we return $\tilde v_k$.
If the inequality holds with equality, it means this is a \emph{flat extreme ray}, and it needs to be checked in second-order test, so we save this extreme ray for future use.

How many flat extreme rays ($g_k(z, \tilde v_k)^T \tilde v_k = 0$) are there? Presence of flat extreme rays introduce inequality constraints in the QP that we solve in the second-order test. It is ideal not to have them, because in this case there are only equality constraints, so the QP is easier to solve. 
Lemma~\ref{lem:extrays} in Appendix~\ref{prg:proof-inc-cou} shows the conditions for having flat extreme rays; in short, there is a flat extreme ray if $\matcol{W_2}{k}^T \nabla \ell_i = 0$ or $s_i^* = s_+$ or $s_-$.
For more details, please refer to Appendix~\ref{prg:proof-inc-cou}.

\vspace*{-5pt}
\subsection{Testing $\eta^T H(z,\eta) \eta \geq 0$ for $\{ \eta \mid g(z,\eta)^T \eta = 0 \}$ (lines~\ref{line:rp-so-st}--\ref{line:rp-so-ed})}
\vspace*{-4pt}

The second-order test checks $\eta^T H(z,\eta) \eta \geq 0$ for ``flat'' $\eta$'s satisfying $g(z,\eta)^T \eta = 0$. 
This is done with help of the function $\algso$ (Algorithm~\ref{alg:so-test}).
Given its input $\{\sigma_{i,k}\}_{k\in[d_h], i\in \indsetz{k}}$, it defines fixed ``Jacobian'' matrices $J_i$ for all data points and equality/inequality constraints for boundary data points, and solves the QP of the following form:
\begin{equation}
\label{eq:algopt2}
\begin{array}{ll}
\minimize\nolimits_{\eta} & 
\sum\nolimits_{i} \nabla \ell_i^T \Delta_2 J_i (\Delta_1 x_i + \delta_1) 
\!+\! \half \sum\nolimits_{i} \npsd{\Delta_2 O(x_i)+\delta_2+W_2 J_i (\Delta_1 x_i \!+\! \delta_1)}{\nabla^2 \ell_i}^2,
\\
\subjectto
&
\begin{array}[t]{ll}
\matcol{W_2}{k}^T u_k \!=\! 
\begin{bmatrix}
W_1 \!&\! b_1
\end{bmatrix}_{k,\cdot} \!v_k, & \forall k \in [d_h],\\
\bar x_i^T v_k = 0,& \forall k \in [d_h], \forall i \in \indsetz{k} \text{ s.t. } \sigma_{i,k} = 0,\\
\sigma_{i,k} \bar x_i^T v_k \geq 0,& \forall k \in [d_h], \forall i \in \indsetz{k} \text{ s.t. } \sigma_{i,k} \in \{-1, +1\}.
\end{array}
\end{array}
\end{equation}

\vspace*{-4pt}
\paragraph{Constraints and number of QPs.}
There are $d_h$ equality constraints of the form $\matcol{W_2}{k}^T u_k = \begin{bmatrix} \matrow{W_1}{k} & \veccomp{b_1}{k} \end{bmatrix} v_k$.
These equality constraints are due to the nonnegative homogeneous property of activation $h$; i.e., scaling $\matrow{W_1}{k}$ and $\veccomp{b_1}{k}$ by $\alpha > 0$ and scaling $\matcol{W_2}{k}$ by $1/\alpha$ yields exactly the same network. 
So, these equality constraints force $\eta$ to be orthogonal to the loss-invariant directions. 
This observation is stated more formally in Lemma~\ref{lem:scaleinvar},
which as a corollary shows that any differentiable FOSP of $\emprisk$ always has rank-deficient Hessian. 
The other constraints make sure that the union of feasible sets of QPs is exactly $\{ \eta \mid g(z,\eta)^T \eta = 0 \}$ (please see Lemma~\ref{lem:feassets} in Appendix~\ref{prg:proof-so-const} for details).
It is also easy to check that these constraints are all linearly independent.

If there is no flat extreme ray, the algorithm solves just one QP with $d_h + M$ equality constraints.
If there are flat extreme rays, the algorithm solves one QP with $d_h + M$ equality constraints, and $2^{K}$ more QPs with $d_h + M - L$ equality constraints and $L$ inequality constraints, where
\begin{equation}
\label{eq:defKL}
K\!\defeq\!\sum_{k=1}^{d_h} \left| \{ i \in \!B_k \mid \matcol{W_2}{k}^T \! \nabla \ell_i = 0 \} \right|,~~
L\!\defeq\!\sum_{k=1}^{d_h} \left| \{ i \in \!B_k \mid \hat v_{i,k} \text { or } -\!\hat v_{i,k} \text{ is a flat ext.\ ray}\} \right|.
\end{equation}
Recall from Section~\ref{prg:testextray} that $i \in B_k$ has a flat extreme ray if $\matcol{W_2}{k}^T \nabla \ell_i = 0$ or $s_i^* = s_+$ or $s_-$;
thus, $K \leq L \leq M$. Please refer to Appendix~\ref{prg:proof-so-numqp} for more details.

\vspace*{-4pt}
\paragraph{Efficiency of solving the QPs~\eqref{eq:algopt2}.}
Despite NP-hardness of general QPs, our specific form of QPs~\eqref{eq:algopt2} can be solved quite efficiently, avoiding exponential complexity in $p$. After solving QP~\eqref{eq:algopt2}, there are three (disjoint) termination conditions:
\begin{enumerate}[label=(T\arabic*)]
\setlength{\itemsep}{0pt}
	\item \label{item:qpc1} $\eta^T Q \eta > 0$ whenever $\eta \in \mc S, \eta \neq \zeros{}$, or
	\item \label{item:qpc2} $\eta^T Q \eta \geq 0$ whenever $\eta \in \mc S$, but $\exists \eta \neq \zeros{}, \eta \in \mc S$ such that $\eta^T Q \eta = 0$, or
	\item \label{item:qpc3} $\exists \eta$ such that $\eta \in \mc S$ and $\eta^T Q \eta < 0$,
\end{enumerate}
where $\mc S$ is the feasible set of QP. With the following two lemmas, we show that the termination conditions can be efficiently tested for ECQPs and ICQPs.
First, the ECQPs can be iteratively solved with projected gradient descent, as stated in the next lemma.
\begin{lemma}
	\label{lem:eq-con-qp}
	Consider the QP, where $Q \in \reals^{p \times p}$ is symmetric and $A \in \reals^{q \times p}$ has full row rank:
	\begin{equation*}
	\begin{array}[t]{llll}
	\minimize_{\eta} & \half \eta^T Q \eta
	&
	\subjectto
	&
	A \eta = \zeros{q}
	\end{array}
	\end{equation*}
	Then, projected gradient descent (PGD) updates
	\begin{equation*}
	\eta^{(t+1)} = (I - A^T (AA^T)^{-1} A) (I - \alpha Q) \eta^{(t)}
	\end{equation*}
	with learning rate $\alpha < 1/\eigmax(Q)$ converges to a solution or diverges to infinity exponentially fast.
	Moreover, with random initialization, PGD correctly checks conditions \ref{item:qpc1}--\ref{item:qpc3} with probability~$1$.
%
\end{lemma}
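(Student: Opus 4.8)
The plan is to reduce the dynamics to plain gradient descent on a lower-dimensional \emph{reduced Hessian} and then read off the three cases from its spectrum. First I would observe that, regardless of $\eta^{(0)}$, every iterate with $t \ge 1$ lies in the feasible subspace $\mc S = \nulsp(A)$, because $P \defeq I - A^T(AA^T)^{-1}A$ is exactly the orthogonal projector onto $\mc S$ (it is symmetric, idempotent, and annihilates the row space of $A$). So I may assume $\eta^{(0)} \in \mc S$. Choosing $U \in \reals^{p \times (p-q)}$ whose columns are an orthonormal basis of $\mc S$, so that $P = UU^T$ and $U^T U = I$, I would write $\eta^{(t)} = U\xi^{(t)}$ and compute $\eta^{(t+1)} = UU^T(I-\alpha Q)U\xi^{(t)} = U(I - \alpha \tilde Q)\xi^{(t)}$, where $\tilde Q \defeq U^T Q U$ is symmetric. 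In the coordinates $\xi$ the update is precisely $\xi^{(t+1)} = (I - \alpha \tilde Q)\xi^{(t)}$, i.e. gradient descent on the reduced quadratic $\half \xi^T \tilde Q \xi$, and crucially $\eta^T Q \eta = \xi^T \tilde Q \xi$ for every feasible $\eta = U\xi$. Hence conditions \ref{item:qpc1}, \ref{item:qpc2}, \ref{item:qpc3} translate verbatim into $\tilde Q \succ 0$, $\tilde Q \succeq 0$ singular, and $\tilde Q$ having a negative eigenvalue.

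Next I would diagonalize $\tilde Q = V \mathop{\rm diag}(\mu_1,\dots,\mu_{p-q}) V^T$ with $V$ orthogonal, so the iteration decouples into scalar recursions $\veccomp{V^T\xi^{(t+1)}}{j} = (1 - \alpha\mu_j)\veccomp{V^T\xi^{(t)}}{j}$. The role of the step size is to make every factor $1-\alpha\mu_j$ positive: the Rayleigh bound $\eigmax(\tilde Q) = \max_{\norm{\xi}=1}(U\xi)^T Q (U\xi) \le \eigmax(Q)$ together with $\alpha < 1/\eigmax(Q)$ gives $\alpha\mu_j \le \alpha\eigmax(Q) < 1$, hence $1-\alpha\mu_j > 0$ for all $j$. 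This rules out oscillation and pins the behavior down coordinatewise: $\mu_j > 0$ gives a contraction factor in $(0,1)$, $\mu_j = 0$ freezes the coordinate, and $\mu_j < 0$ gives an expansion factor strictly larger than $1$.

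The three cases then follow. If $\tilde Q \succ 0$ (case \ref{item:qpc1}), all factors lie in $(0,1)$ and $\xi^{(t)} \to 0$ geometrically, converging to the unique minimizer $\eta = \zeros{}$. If $\tilde Q \succeq 0$ is singular (case \ref{item:qpc2}), the positive-eigenvalue components decay geometrically while the zero-eigenvalue components stay fixed, so $\xi^{(t)}$ converges to the projection $\xi^\star$ of $\xi^{(0)}$ onto $\nulsp(\tilde Q)$; the limit $\eta^\star = U\xi^\star$ is feasible with $(\eta^\star)^T Q \eta^\star = (\xi^\star)^T \tilde Q \xi^\star = 0$, hence a minimizer, and it is nonzero exactly when $\xi^{(0)}$ has nonzero projection onto $\nulsp(\tilde Q)$. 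If $\tilde Q$ has a negative eigenvalue (case \ref{item:qpc3}), the component along the most negative eigenvector grows by a fixed factor $1-\alpha\mu_{\min}>1$ per step, so $\norm{\eta^{(t)}}\to\infty$ exponentially and the normalized iterate aligns with that eigenvector, a feasible direction with $\eta^T Q \eta < 0$. Thus in every case the dynamics either converge to a solution or diverge to infinity, both exponentially fast.

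Finally, for the probability-$1$ test I would have the algorithm monitor $\norm{\eta^{(t)}}$: geometric blow-up certifies \ref{item:qpc3} (and the normalized limit supplies the descent direction), convergence to $\zeros{}$ certifies \ref{item:qpc1}, and convergence to a nonzero point certifies \ref{item:qpc2} (with that point as the witness $\eta$). The only way this can err is if a randomly initialized $\xi^{(0)}$ happens to have zero component along the negative-eigenvalue eigenspace (case \ref{item:qpc3}) or zero projection onto $\nulsp(\tilde Q)$ (case \ref{item:qpc2}); each bad set is a proper subspace of $\reals^{p-q}$ of Lebesgue measure zero, so a continuously distributed initialization avoids it almost surely. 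I expect this probability-$1$ bookkeeping to be the main subtlety—separating \ref{item:qpc1} from \ref{item:qpc2} rests precisely on the nonzero-projection event, and one must confirm the exponential gaps in the rates make the three behaviors distinguishable. The remaining manipulations (idempotency of $P$, the Rayleigh bound, and diagonalizing $\tilde Q$) are routine.
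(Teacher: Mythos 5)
Your proof is correct and follows essentially the same route as the paper's: both reduce the PGD iteration to $\xi^{(t+1)}=(I-\alpha U^TQU)\xi^{(t)}$ via an orthonormal basis of $\nulsp(A)$ (the paper's $\hat W$ is your $U$), bound $\eigmax(U^TQU)\le\eigmax(Q)$ by the Rayleigh/principal-submatrix argument, diagonalize the reduced Hessian to get the coordinatewise geometric rates, and handle the probability-$1$ claim by noting the bad initializations lie in measure-zero subspaces. No gaps; the case analysis matching \ref{item:qpc1}--\ref{item:qpc3} to the sign pattern of the reduced spectrum is exactly the paper's.
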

The proof is an extension of unconstrained case \citep{lee2016gradient}, and is deferred to Appendix~\ref{sec:pflemeq-con-qp}.
Note that it takes $O(p^2 q)$ time to compute $(I - A^T (AA^T)^{-1} A) (I - \alpha Q)$ in the beginning, and each update takes $O(p^2)$ time.
It is also surprising that the convergence rate does not depend on $q$.

In the presence of flat extreme rays, we have to solve QPs involving $L$ inequality constraints. We prove that our ICQP can be solved in $O(p^3 + L^3 2^{L})$ time, which implies that as long as the number of flat extreme rays is small, the problem can still be solved in polynomial time in $p$.
\begin{lemma}
	\label{lem:ineq-con-qp}
	Consider the QP, where $Q \in \reals^{p \times p}$ is symmetric, $A \in \reals^{q \times p}$ and $B \in \reals^{r \times p}$ have full row rank, and $\begin{bmatrix} A^T & B^T \end{bmatrix}$ has rank $q+r$:
	\begin{equation*}
	\begin{array}[t]{llll}
	\minimize_{\eta} & \eta^T Q \eta
	&
	\subjectto
	&A \eta = \zeros{q}, ~B \eta \geq \zeros{r}.
	\end{array}
	\end{equation*}
	Then, there exists a method that checks whether \ref{item:qpc1}--\ref{item:qpc3} in $O(p^3 + r^3 2^r)$ time.
\end{lemma}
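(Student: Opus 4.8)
The plan is to reduce the indefinite QP over the polyhedral cone $\mc S = \{\eta : A\eta = \zeros{q},\ B\eta \geq \zeros{r}\}$ to a copositivity test for a single $r \times r$ matrix over the nonnegative orthant, doing all the ``expensive'' linear algebra once in $O(p^3)$ time and paying the exponential cost only on objects of size $r$. First I would strip the equality constraints and normalize the inequalities by a change of variables. Let $N \in \reals^{p \times (p-q)}$ be a basis of $\ker A$ (computable in $O(p^3)$); writing $\eta = N\zeta$ makes $A\eta = \zeros{q}$ automatic, turns the objective into $\zeta^T (N^T Q N)\zeta$, and turns the inequalities into $(BN)\zeta \geq \zeros{r}$. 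Because $\begin{bmatrix} A^T & B^T\end{bmatrix}$ has rank $q+r$, the rows of $B$ are independent modulo the row space of $A$, so $BN$ has full row rank $r$; a further invertible substitution $\zeta = Pw$ then brings $BN$ to $\begin{bmatrix} I_r & \zeros{} \end{bmatrix}$. Splitting $w = (s,t)$ with $s \in \reals^r$ constrained by $s \geq \zeros{r}$ and $t \in \reals^{p-q-r}$ free, the feasible cone becomes $K = \{(s,t) : s \geq \zeros{r}\}$ and the objective a quadratic form $w^T \bar Q w$ with $\bar Q = \begin{bmatrix} Q_{ss} & Q_{st}\\ Q_{ts} & Q_{tt}\end{bmatrix}$. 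Each of these steps is $O(p^3)$, and the composite map is a linear bijection between $\mc S$ and $K$, so it preserves conditions \ref{item:qpc1}--\ref{item:qpc3}.

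Next I would eliminate the free block $t$. If $Q_{tt}$ is not PSD, then $(\zeros{r}, v)$ with $v$ a negative-eigenvalue eigenvector lies in $K$ and makes the form negative, so \ref{item:qpc3} holds; the PSD-but-singular case is caught by testing whether $Q_{ts}s$ remains in the range of $Q_{tt}$ over feasible $s$ (otherwise the inner minimization over $t$ is $-\infty$, again \ref{item:qpc3}). When $Q_{tt} \succ \zeros{}$, minimizing over $t$ for fixed $s$ gives the Schur complement $S = Q_{ss} - Q_{st}Q_{tt}^{-1}Q_{ts}$, and the identity $w^T\bar Q w = s^T S s + (t - t^*(s))^T Q_{tt}(t - t^*(s))$ shows directly that copositivity, strict copositivity, and the existence of a nonzero zero of $w^T\bar Q w$ over $K$ are \emph{equivalent} to the same three properties of $s^T S s$ over $\reals^r_{\geq 0}$. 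This identifies \ref{item:qpc1}--\ref{item:qpc3} with a copositivity question for the single $r \times r$ matrix $S$; forming $S$ is one more $O(p^3)$ step.

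Finally I would resolve copositivity of $S$ over $\reals^r_{\geq 0}$ by enumerating the $2^r$ subsets $I \subseteq [r]$. The key is an active-set/KKT argument: the minimum of $s^T S s$ over $\reals^r_{\geq 0} \cap \{\|s\| = 1\}$ is attained at some $s^*$, and, writing $I = \{i : s^*_i > 0\}$, stationarity forces $S_{II}\,s^*_I = \nu\, s^*_I$ with $\nu$ equal to the optimal value; that is, the optimum is an eigenvalue of the principal submatrix $S_{II}$ possessing a strictly positive eigenvector. Hence it suffices, for each $I$, to eigendecompose $S_{II}$ (cost $O(|I|^3) \leq O(r^3)$) and collect those eigenvalues whose eigenspace meets the positive orthant of $\reals^{|I|}$ (a small LP-feasibility test); every such eigenvalue is an attained value of the Rayleigh quotient on the cone, and the true optimum is one of them, so their overall minimum equals the optimum. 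Its sign then decides \ref{item:qpc3} (negative, hence $-\infty$ over the cone), \ref{item:qpc2} (zero), or \ref{item:qpc1} (positive). Since $\sum_{I \subseteq [r]} O(|I|^3) = O(r^3 2^r)$, the total is the claimed $O(p^3 + r^3 2^r)$.

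The main obstacle is the last step: establishing rigorously that inspecting eigenvectors of the $2^r$ principal submatrices certifies the full trichotomy, i.e., that the constrained minimizer's support localizes the optimum to a principal-submatrix eigenproblem, and getting the boundary cases exactly right --- zero eigenvalues and higher-dimensional eigenspaces (needed to separate \ref{item:qpc1} from \ref{item:qpc2}), together with the singular-$Q_{tt}$ reduction --- so that the algorithm decides \ref{item:qpc1}/\ref{item:qpc2}/\ref{item:qpc3} rather than merely the copositive-versus-not dichotomy.
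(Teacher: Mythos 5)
Your proposal follows essentially the same route as the paper: eliminate the equality constraints and bring the inequalities to the form $s \geq \zeros{r}$ by an invertible change of variables, split off the free block and reduce to a Schur complement (your elimination of $t$ is exactly Theorems~2.1--2.2 of Martin (1981), which the paper cites rather than re-derives), and then test copositivity of the resulting $r \times r$ matrix by enumerating eigensystems of its $2^r$ principal submatrices, which is the Pareto-spectrum characterization of Seeger (1999) used in the paper. The boundary issues you flag as the main obstacle are precisely what those cited results settle --- the condition $\nulsp(\bar R_{22}) \subseteq \nulsp(\bar R_{12})$ in the semidefinite case and the case analysis separating \ref{item:qpc1} from \ref{item:qpc2} --- so your plan is correct and matches the paper's proof.
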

In short, we transform $\eta$ to define an equivalent problem, and use classical results in copositive matrices \citep{martin1981copositive, seeger1999eigenvalue, hiriart2010variational}; the problem can be solved by computing the eigensystem of a $(p-q-r) \times (p-q-r)$ matrix, and testing copositivity of an $r \times r$ matrix. The proof is presented in Appendix~\ref{sec:pflemineq-con-qp}.

\vspace*{-4pt}
\paragraph{Concluding the test.}
During all calls to $\algso$, whenever any QP terminated with \ref{item:qpc3}, then $\algmain$ immediately returns the direction and terminates. After solving all QPs, if any of $\algso$ calls finished with \ref{item:qpc2}, then we conclude $\algmain$ with ``SOSP.'' If all QPs terminated with \ref{item:qpc1}, then we can return ``Local Minimum.''


\vspace*{-6pt}
\section{Experiments}
\vspace*{-6pt}


For experiments, we used artificial datasets sampled iid from standard normal distribution, and trained 1-hidden-layer ReLU networks with squared error loss. 
In practice, it is impossible to get to the exact nondifferentiable point, because they lie in a set of measure zero. To get close to those points, we ran Adam \citep{kingma2014adam} using full-batch (exact) gradient for 200,000 iterations and decaying step size (start with $10^{-3}$, $0.2 \times$ decay every 20,000 iterations). We observed that decaying step size had the effect of ``descending deeper into the valley.''

After running Adam, for each $k \in [d_h]$, we counted the number of \emph{approximate boundary data points} satisfying $| \veccomp{W_1 x_i + b_1}{k} | < 10^{-5}$, which gives an estimate of $M_k$. Moreover, for these points, we solved the QP \eqref{eq:algopt1} using L-BFGS-B \citep{byrd1995limited}, to check if the terminated points are indeed (approximate) FOSPs. We could see that the optimal values of \eqref{eq:algopt1} are close to zero ($\leq 10^{-6}$ typically, $\leq 10^{-3}$ for largest problems).
After solving \eqref{eq:algopt1}, we counted the number of $s_i^*$'s that ended up with $0$ or $1$. The number of such $s_i^*$'s is an estimate of $L-K$.
We also counted the number of approximate boundary data points satisfying $|\matcol{W_2}{k}^T \nabla \ell_i| < 10^{-4}$, for an estimate of $K$.

We ran the above-mentioned experiments for different settings of $(d_x, d_h, m)$, 40 times each. We fixed $d_y = 1$ for simplicity. For large $d_h$, the optimizer converged to near-zero minima, making $\nabla \ell_i$ uniformly small, so it was difficult to obtain accurate estimates of $K$ and $L$. Thus, we had to perform experiments in settings where the optimizer converged to minima that are far from zero.

\begin{table}[t]\small
	\caption{Summary of experimental results}
	\label{tbl:exp}
	\begin{center}
		\begin{tabular}{l|c|l|l|l|l}
			$(d_x, d_h, m)$ &\# Runs & Sum $M$ (Avg.) & Sum $L$ (Avg.) & Sum $K$ (Avg.) & $\P\{L>0\}$\\
			\hline
			$(10,1,1000)$ & 40 & 290 (7.25) & 0 (0) & 0 (0) & 0 \rule{0pt}{2.6ex} \\
			$(10,1,10000)$ & 40 & 371 (9.275)& 1 (0.025) & 0 (0) & 0.025 \\
			$(100,1,1000)$ & 40 & 1,452 (36.3) & 0 (0) & 0 (0) & 0 \\
			$(100,1,10000)$ & 40 & 2,976 (74.4) & 2 (0.05) & 0 (0) & 0.05 \\
			$(100,10,10000)$ & 40 & 24,805 (620.125) & 4 (0.1) & 0 (0) & 0.1 \\
			$(1000,1,10000)$ & 40 & 14,194 (354.85) & 0 (0) & 0 (0) & 0 \\
			$(1000,10,10000)$ & 40 & 42,334 (1,058.35) & 37 (0.925) & 1 (0.025) & 0.625 								
		\end{tabular}
	\end{center}
\end{table}

Table~\ref{tbl:exp} summarizes the results. Through 280 runs, we observed that there are surprisingly many boundary data points~($M$) in general, but usually there are zero or very few (maximum was 3) flat extreme rays~($L$). This observation suggests two important messages: (1) many local minima are on nondifferentiable points, which is the reason why our analysis is meaningful; (2) luckily, $L$ is usually very small, so we only need to solve ECQPs ($L = 0$) or ICQPs with very small number of inequality constraints, which are solved efficiently (Lemmas~\ref{lem:eq-con-qp} and \ref{lem:ineq-con-qp}).
We can observe that $M$, $L$, and $K$ indeed increase as model dimensions and training set get larger, but the rate of increase is not as fast as $d_x$, $d_h$, and $m$. 


\vspace*{-6pt}
\section{Discussion and future work}
\vspace*{-6pt}
We provided a theoretical algorithm that tests second-order stationarity and escapes saddle points, for any points (including nondifferentiable ones) of empirical risk of shallow ReLU-like networks.
Despite difficulty raised by boundary data points dividing the parameter space into $2^M$ regions, we reduced the computation to
$d_h$ convex QPs, $O(M)$ equality/inequality tests, and one (or a few more) nonconvex QP.
In benign cases, the last QP is equality constrained, which can be efficiently solved with projected gradient descent.
In worse cases, the QP has a few (say $L$) inequality constraints, but it can be solved efficiently when $L$ is small.
We also provided empirical evidences that $L$ is usually either zero or very small, suggesting that the test can be done efficiently in most cases.
A limitation of this work is that in practice, \emph{exact} nondifferentiable points are impossible to reach, so the algorithm must be extended to apply the nonsmooth analysis for points that are ``close'' to nondifferentiable ones.
Also, current algorithm only tests for \emph{exact} SOSP, while it is desirable to check approximate second-order stationarity. These extensions must be done in order to implement a robust numerial version of the algorithm, but they require significant amount of additional work; thus, we leave practical/robust implementation as future work. Also, extending the test to deeper neural networks is an interesting future direction.

\subsubsection*{Acknowledgments}
This work was supported by the DARPA Lagrange Program. Suvrit Sra also acknowledges support from an Amazon Research Award.

\bibliography{../../cite}
\bibliographystyle{iclr2019_conference}

\appendix
\newpage

\setcounter{theorem}{0}
\numberwithin{theorem}{section}

\begin{algorithm}[t]
	\caption{$\algmain$} \label{alg:test}
	\begin{algorithmic}[1]
		\Statex Input: A tuple $(W_j,b_j)_{j=1}^2$ of $\emprisk(\cdot)$. 
		\If {$\sum\nolimits_{i=1}^m \nabla \ell_i  \begin{bmatrix} O(x_i)^T & 1 \end{bmatrix} \neq \zeros{d_y \times (d_h +1)}$} \label{line:l2-fosp-st}
		\State \label{line:l2-fosp-ret} \Return 
		$\begin{bmatrix} \Delta_2 & \delta_2 \end{bmatrix}\leftarrow 
		- \sum\nolimits_{i=1}^m \nabla \ell_i  \begin{bmatrix} O(x_i)^T & 1 \end{bmatrix},~~
		\Delta_1 \leftarrow \zeros{d_h \times d_x},~~\delta_1 \leftarrow \zeros{d_h}$. 
		\EndIf \label{line:l2-fosp-ed}
		\For {$k \in [d_h]$}
		\If {$M_k > 0$} 
		\State \label{line:l1-fosp-st} $\{s_i^*\}_{i \in \indsetz{k}} \leftarrow \algfosubdif(k)$
		\State $\tilde v_k^T \leftarrow \matcol{W_2}{k}^T ( \amatpm{k} + \sum\nolimits_{i \in \indsetz{k}} s_i^* \nabla \ell_i \bar x_i^T	)$.
		\If {$\tilde v_k \neq \zeros{d_x + 1}$}
		\State \Return $v_k \leftarrow -\tilde v_k,~~\forall k' \in [d_h]\setminus\{k\}, v_{k'} \leftarrow \zeros{d_x+1},~~
		\Delta_2 \leftarrow \zeros{d_y \times d_h},~~\delta_2 \leftarrow \zeros{d_y}$.
		\EndIf \label{line:l1-fosp-ed}
		\State \label{line:inc-st} $(\vars{decr}, \tilde v_k, \{S_{i,k}\}_{i \in \indsetz{k}}) \leftarrow \algfoinc(k, \{s_i^*\}_{i \in \indsetz{k}})$. 
		\If {$\vars{decr} = \vars{True}$}
		\State \label{line:inc-ret} \Return $v_k \leftarrow \tilde v_k,~~\forall k' \in [d_h]\setminus\{k\}, v_{k'} \leftarrow \zeros{d_x+1},~~
		\Delta_2 \leftarrow \zeros{d_y \times d_h},~~\delta_2 \leftarrow \zeros{d_y}$. 
		\EndIf \label{line:inc-ed}
		\ElsIf { $\matcol{W_2}{k}^T \amatpm{k} \neq \zeros{d_x+1}^T$ } \label{line:l1-fosp-nobdp-st}
		\State \label{line:l1-fosp-nobdp-ret} \Return $v_k \leftarrow -C_k^T \matcol{W_2}{k},~\forall k' \in [d_h]\setminus\{k\}, v_{k'} \leftarrow \zeros{d_x+1},~
		\Delta_2 \leftarrow \zeros{d_y \times d_h},~\delta_2 \leftarrow \zeros{d_y}$. 
		\EndIf \label{line:l1-fosp-nobdp-ed}
		\EndFor 
		\State \label{line:so-st}$(\vars{decr}, \vars{sosp}, (\Delta_j, \delta_j)_{j=1}^2 ) \leftarrow \algso(\{0\}_{k \in [d_h], i\in \indsetz{k}}).$ 
		\If {$\vars{decr} = \vars{True}$} 
		\Return $(\Delta_j, \delta_j)_{j=1}^2$.
		\EndIf
		\If {$M \neq 0$ and $\{S_{i,k}\}_{k \in [d_h], i\in \indsetz{k}} \neq \{ \{0\} \}_{k \in [d_h], i\in \indsetz{k}}$} \label{line:so-if}
		\For {each element $\{\sigma_{i,k}\}_{k \in [d_h], i \in \indsetz{k}} \in \prod_{k \in [d_h]} \prod_{i \in \indsetz{k}} S_{i,k}$}
		\State $(\vars{decr}, \vars{sospTemp}, (\Delta_j, \delta_j)_{j=1}^2 ) \leftarrow \algso(\{\sigma_{i,k}\}_{k \in [d_h], i \in \indsetz{k}}).$ 
		\If {$\vars{decr} = \vars{True}$} 
		\Return $(\Delta_j, \delta_j)_{j=1}^2$.
		\EndIf
		\State $\vars{sosp} \leftarrow \vars{sosp} \vee \vars{sospTemp}$
		\EndFor		
		\EndIf	
		\If {$\vars{sosp} = \vars{True}$}
		\Return SOSP.
		\Else {} \Return Local Minimum.
		\EndIf \label{line:so-ed}
	\end{algorithmic}
\end{algorithm}

\begin{algorithm}[t]
	\caption{$\algfosubdif$} \label{alg:fo-sb-test}
	\begin{algorithmic}[1]
		\Statex Input: $k \in [d_h]$
		\State Solve the following optimization problem and get optimal solution $\{s_i^*\}_{i \in \indsetz{k}}$: 
		\begin{equation*}
		\tag{\ref{eq:algopt1}}
		\begin{array}{ll}
		\minimize_{ \{s_i\}_{i \in \indsetz{k}}} & 
		\ltwos{\matcol{W_2}{k}^T 
			(
			\amatpm{k}
			+
			\sum\nolimits_{i \in \indsetz{k}}
			s_i
			\nabla \ell_i \bar x_i^T
			)}^2
		\\
		\subjectto & 
		\min\{s_-,s_+\} \leq s_i \leq \max \{s_-,s_+\},~~ \forall i \in \indsetz{k},
		\end{array}
		\end{equation*}
		\State \Return $\{s_i^*\}_{i \in \indsetz{k}}$.
	\end{algorithmic}
\end{algorithm}

\begin{algorithm}[t]
	\caption{\algfoinc} \label{alg:fo-inc-test}
	\begin{algorithmic}[1]
		\Statex Input: $k \in [d_h]$, $\{s_i^*\}_{i \in \indsetz{k}}$
		\For {all $i \in \indsetz{k}$}
		\State Define $S_{i,k} \leftarrow \emptyset$.
		\State \label{line:extray} Get a vector $\hat v_{i,k} \in \mc V_k \cap \spann \{ \bar x_j \mid j \in B_k \setminus \{i\} \}^\perp$.
		\For {$\tilde v_k \in \{\hat v_{i,k}, -\hat v_{i,k}\}$} \label{line:pt22for}
		\State Define $\sigma_{i,k} \leftarrow \sign (\bar x_i^T \tilde v_k)$.
		\If {$(s_{\sigma_{i,k}} - s_i^*)
			\matcol{W_2}{k}^T
			\nabla \ell_i 
			\bar x_i^T
			\tilde v_k < 0$} \label{line:pt22fornegst}
		\State \Return $(\vars{True}, \tilde v_k, \{\emptyset\}_{i\in \indsetz{k}})$ \label{line:pt22forneged}
		\ElsIf {$(s_{\sigma_{i,k}} - s_i^*)
			\matcol{W_2}{k}^T
			\nabla \ell_i 
			\bar x_i^T
			\tilde v_k = 0$}  \label{line:pt22forzerost}
		\State $S_{i,k} \leftarrow S_{i,k} \cup \{\sigma_{i,k}\}$. \label{line:pt22forzeroed}
		\EndIf
		\EndFor \label{line:pt22fored}
		\State If $S_{i,k} = \emptyset$, $S_{i,k} \leftarrow \{ 0 \}$.
		\EndFor
		\State \Return $(\vars{False}, \zeros{d_x+1}, \{S_{i,k}\}_{i\in \indsetz{k}} )$. \label{line:pt22ret}
	\end{algorithmic}
\end{algorithm}

\begin{algorithm}[t]
	\caption{\algso} \label{alg:so-test}
	\begin{algorithmic}[1]
		\Statex Input: $\{\sigma_{i,k}\}_{k \in [d_h], i\in \indsetz{k}}$
		\State For all $i \in [m]$, define diagonal matrices $J_i\in \reals^{d_h \times d_h}$ such that for $k \in [d_h]$,
		\begin{equation*}
		\matent{J_i}{k}{k} \leftarrow 
		\begin{cases}
		h'(\veccomp{N(x_i)}{k}) & \text{ if } i \in [m] \setminus \indsetz{k}\\
		s_{\sigma_{i,k}} & \text{ if } i \in \indsetz{k} \text{ and } \sigma_{i,k} \in \{-1, +1\}\\
		0 & \text{ if } i \in \indsetz{k} \text{ and } \sigma_{i,k} = 0.
		\end{cases}
		\end{equation*} 
		\State Solve the following QP.
		If there is no solution, get a descent direction $(\Delta_j^*, \delta_j^*)_{j=1}^2$.
		\begin{equation*}
		\tag{\ref{eq:algopt2}}
		\begin{array}{ll}
		\minimize\limits_{\eta} & 
		\sum\limits_{i} \! \nabla \ell_i^T \Delta_2 J_i (\Delta_1 x_i \!+\! \delta_1) 
		\!+\! \half \sum\limits_{i} \! \npsd{\Delta_2 O(x_i)\!+\!\delta_2\!+\!W_2 J_i (\Delta_1 x_i \!+\! \delta_1)}{\nabla^2 \ell_i}^2,
		\\
		\subjectto
		&
		\begin{array}[t]{ll}
		\matcol{W_2}{k}^T u_k \!=\! 
		\begin{bmatrix}
		W_1 \!&\! b_1
		\end{bmatrix}_{k,\cdot} \!v_k, & \forall k \in [d_h],\\
		\bar x_i^T v_k = 0,& \forall k \in [d_h], \forall i \in \indsetz{k} \text{ s.t. } \sigma_{i,k} = 0,\\
		\sigma_{i,k} \bar x_i^T v_k \geq 0,& \forall k \in [d_h], \forall i \in \indsetz{k} \text{ s.t. } \sigma_{i,k} \in \{-1, +1\}.
		\end{array}
		\end{array}
		\end{equation*}
		\If {There is no solution}
		\Return $(\vars{True},\vars{False},(\Delta_j^*, \delta_j^*)_{j=1}^2)$.
		\ElsIf {QP has nonzero minimizers}
		\Return $(\vars{False},\vars{True},\zeros{})$
		\Else {} \Return $(\vars{False},\vars{False},\zeros{})$
		\EndIf
	\end{algorithmic}
\end{algorithm}

\section{Full algorithms and proof of correctness}
\label{sec:proof}
In this section, we present the detailed operation of $\algmain$ (Algorithm~\ref{alg:test}), and its helper functions $\algfosubdif$, $\algfoinc$, and $\algso$ (Algorithm~\ref{alg:fo-sb-test}--\ref{alg:so-test}).

In the subsequent subsections, we provide a more detailed proof of the correctness of Algorithm~\ref{alg:test}.
Recall that, by Lemmas~\ref{lem:datapoints} and \ref{lem:expansion}, $M_k \defeq |B_k| \leq d_x$ and vectors $\{ \bar x_i\}_{i \in B_k}$ are linearly independent. Also, we can expand $\emprisk(z + \eta)$ so that
\begin{equation*}
\emprisk(z + \eta) = \emprisk(z) + g(z,\eta)^T \eta + \half \eta^T H(z,\eta) \eta + o(\norm{\eta}^2),
\end{equation*}
where $g(z,\eta)$ and $H(z,\eta)$ satisfy
\begin{align*}
g(z,\eta)^T \eta &= 
\sum\nolimits_{i} \nabla \ell_i^T \pert Y_1(x_i) = 
\dotprod{\sum\nolimits_{i} \nabla \ell_i O(x_i)^T}{\Delta_2} + 
\dotprod{\sum\nolimits_{i} \nabla \ell_i}{\delta_2} + 
\sum_{k=1}^{d_h}
g_k(z,v_k)^T
v_k,\\
\eta^T H(z,\eta) \eta &=
\sum\nolimits_{i} \nabla \ell_i^T \pert Y_2(x_i) + 
\half \sum\nolimits_{i} \npsd{\pert Y_1(x_i)}{\nabla^2 \ell_i}^2,
\end{align*}
and $g_k(z,v_k)^T \defeq \matcol{W_2}{k}^T
\left (
C_k + \sum\nolimits_{i \in B_k}
h'(\bar x_i ^T v_k)
\nabla \ell_i \bar x_i^T 
\right )$.

\subsection{Testing first-order stationarity (lines~\ref{line:l2-fosp-st}--\ref{line:l2-fosp-ed}, \ref{line:l1-fosp-st}--\ref{line:l1-fosp-ed} and \ref{line:l1-fosp-nobdp-st}--\ref{line:l1-fosp-nobdp-ed})}
\subsubsection{Test of first-order stationarity for $W_2$ and $b_2$ (lines~\ref{line:l2-fosp-st}--\ref{line:l2-fosp-ed})}
\label{sec:proof-l2-fosp}
Lines~\ref{line:l2-fosp-st}--\ref{line:l2-fosp-ed} of Algorithm~\ref{alg:test} correspond to testing if $\partial_{W_2} \emprisk = \{\zeros{d_y \times d_h}\}$ and $\partial_{b_2} \emprisk = \{\zeros{d_y}\}$. If they are not all zero, the opposite direction is a descent direction, as Line~\ref{line:l2-fosp-ret} returns. 
To see why, 
suppose $\sum\nolimits_{i=1}^m \nabla \ell_i  \begin{bmatrix} O(x_i)^T & 1 \end{bmatrix} \neq \zeros{d_y \times (d_h +1)}$.
Then choose perturbations
\begin{equation*}
\begin{bmatrix} \Delta_2 & \delta_2 \end{bmatrix} = 
- \sum\nolimits_{i=1}^m \nabla \ell_i  \begin{bmatrix} O(x_i)^T & 1 \end{bmatrix},~~
\Delta_1 = \zeros{d_h \times d_x},~~\delta_1 = \zeros{d_h}.
\end{equation*}
If we apply perturbation $(\gamma \Delta_j, \gamma \delta_j)_{j=1}^2$ where $\gamma>0$,
we can immediately check that $\pert Y_1 (x_i) = \Delta_2 O(x_i) + \delta_2$ and $\pert Y_2 (x_i) = \zeros{}$.
So,
\begin{align*}
g(z,\eta)^T \eta
&= \sum\nolimits_{i=1}^m \nabla \ell_i^T
\left ( \Delta_2 O(x_i) + \delta_2 \right )
=\dotprod
{\sum\nolimits_{i=1}^m \nabla \ell_i
	\begin{bmatrix} O(x_i)^T \!&\! 1 \end{bmatrix} }
{ \begin{bmatrix}\Delta_2 \!&\! \delta_2 \end{bmatrix} }
=-O(\gamma),\\
\eta^T H(z,\eta) \eta &= 
\half \sum\nolimits_{i} \pert Y_1(x_i)^T \nabla^2 \ell_i \pert Y_1(x_i)
=
O(\gamma^2) \geq 0.
\end{align*}
and also that $\sum_{i=1}^m \ltwosq{\pert Y(x_i)} = O(\gamma^2)$. Then, by scaling $\gamma$ sufficiently small we can achieve $\emprisk( z+\eta ) < \emprisk ( z )$, which disproves that $(W_j,b_j)_{j=1}^2$ is a local minimum.

\subsubsection{Test of first-order stationarity for $W_1$ and $b_1$ (lines~\ref{line:l1-fosp-st}--\ref{line:l1-fosp-ed} and \ref{line:l1-fosp-nobdp-st}--\ref{line:l1-fosp-nobdp-ed})}
\label{sec:proof-l1-fosp}

Test for $W_1$ and $b_1$ is more difficult because $g(z,\eta)$ depends on $\Delta_1$ and $\delta_1$ when there are boundary data points.
Recall that $v_k^T$ ($k \in [d_h]$) is the $k$-th row of 
$\begin{bmatrix}
\Delta_1 & \delta_1
\end{bmatrix}$.
Then note from Lemma~\ref{lem:expansion} that
\begin{align*}
\sum\nolimits_{i=1}^m \nabla \ell_i^T
\left (W_2 J(x_i) (\Delta_1 x_i+\delta_1) \right )
=
\sum\nolimits_{k=1}^{d_h}
g_k(z,v_k)^T
v_k,
\end{align*}
where $g_k(z,v_k)^T \defeq \matcol{W_2}{k}^T
\left (
C_k + \sum\nolimits_{i \in B_k}
h'(\bar x_i ^T v_k)
\nabla \ell_i \bar x_i^T 
\right )$.
Thus we can separate $k$'s and treat them individually. 

\paragraph{Test for zero gradient.}
Recall the definition $M_k \defeq |\indsetz{k}|$. If $M_k = 0$, there is no boundary data point for $k$-th hidden node, so the Clarke subdifferential with respect to $\matrow{W_1~b_1}{k}$, is $\{\amatpm{k}^T \matcol{W_2}{k}\}$. Lines~\ref{line:l1-fosp-nobdp-st}--\ref{line:l1-fosp-nobdp-ed} handle this case; if the singleton element in the subdifferential is not zero, its opposite direction is a descent direction, so return that direction, as in Line~\ref{line:l1-fosp-nobdp-ret}.


\paragraph{Test for zero in subdifferential.}
For the case $M_k > 0$, we saw that for boundary data points $i\in \indsetz{k}$, $h'( \veccomp{\Delta_1 x_i + \delta_1}{k} ) = h'( \bar x_i^T v_k ) \in \{s_-, s_+\}$ depends on $v_k$.
Lines~\ref{line:l1-fosp-st}--\ref{line:l1-fosp-ed} test if $\zeros{d_x+1}^T$ is in the Clarke subdifferential of $\emprisk$ with respect to $\matrow{W_1}{k}$ and $\veccomp{b_1}{k}$. Since the subdifferential is used many times, we give it a specific name $\mc D_k \defeq \partial_{\matrow{W_1~b_1}{k}} \emprisk$. By observing that $\mc D_k$ is the convex hull of all possible values of $g_k(z,v_k)^T$, 
\begin{align*}
\mc D_k \defeq
\Bigg \{
\matcol{W_2}{k}^T 
\left (
\amatpm{k}
+
\sum\nolimits_{i \in \indsetz{k}}
s_i
\nabla \ell_i \bar x_i^T
\right )
\mid
\min\{s_-,s_+\} \leq s_i \leq \max \{s_-,s_+\},~~ \forall i \in \indsetz{k}
\Bigg \}.
\end{align*}
Testing $\zeros{d_x+1}^T \in \mc D_k$ is done by $\algfosubdif$ in Algorithm~\ref{alg:fo-sb-test}. 
It solves a convex QP~\eqref{eq:algopt1}, and returns $\{ s_i^* \}_{i \in \indsetz{k}}$.

If $\zeros{d_x+1}^T \in \mc D_k$, $\{ s_i^* \}_{i \in \indsetz{k}}$ will satisfy $\tilde v_k^T \defeq \matcol{W_2}{k}^T 
(
\amatpm{k}
+
\sum\nolimits_{i \in \indsetz{k}}
s_i^*
\nabla \ell_i \bar x_i^T
) = \zeros{d_x+1}^T$. 
Suppose $\zeros{d_x+1}^T \notin \mc D_k$. 
Then, $\tilde v_k$ is the closest vector in $\mc D_k$ from the origin, so $\dotprod{\tilde v_k}{v} > 0$ for all $v^T \in \mc D_k$.
Choose perturbations
\begin{equation*}
v_k = -\tilde v_k,~~v_{k'} = \zeros{d_x+1} \text{ for all } k' \in [d_h]\setminus\{k\},~~
\Delta_2 = \zeros{d_y \times d_h},~~\delta_2 = \zeros{d_y},
\end{equation*}
and apply perturbation $(\gamma \Delta_j, \gamma \delta_j)_{j=1}^2$ where $\gamma>0$.
With this perturbation, we can check that
\begin{align*}
&g(z,\eta)^T \eta = \sum\nolimits_{i=1}^m \nabla \ell_i^T \pert Y_1(x_i) 
=
- \gamma
\matcol{W_2}{k}^T
\left (
\amatpm{k}
+
\sum\nolimits_{i \in \indsetz{k}}
h'( - \bar x_i^T \tilde v_k )
\nabla \ell_i \bar x_i^T
\right )
\tilde v_k,
\end{align*}
and since $h'( - \bar x_i^T \tilde v_k ) \in \{s_-, s_+\}$ for $i \in \indsetz{k}$, we have
\begin{equation*}
\matcol{W_2}{k}^T
\left (
\amatpm{k}
+
\sum\nolimits_{i \in \indsetz{k}}
h'( - \bar x_i^T \tilde v_k )
\nabla \ell_i \bar x_i^T
\right )
\in \mc D_k,
\end{equation*}
and $\dotprod{\tilde v_k}{v} > 0$ for all $v^T \in \mc D_k$ shows that $g(z,\eta)^T \eta$ is strictly negative with magnitude $O(\gamma)$. 
It is easy to see that $\eta^T H(z,\eta) \eta = O(\gamma^2)$, so by scaling $\gamma$ sufficiently small
we can disprove local minimality of $(W_j,b_j)_{j=1}^2$.

\subsection{Testing $g(z,\eta)^T \eta \geq 0$ for all $\eta$ (lines~\ref{line:inc-st}--\ref{line:inc-ed})}
\label{sec:proof-inc}

\paragraph{Linear program formulation.}
Lines~\ref{line:inc-st}--\ref{line:inc-ed} are essentially about testing if $g_k(z,v_k)^T v_k \geq 0$ for all directions $v_k$. If $\zeros{d_x+1}^T \in \mc D_k$, with the solution $\{s_i^*\}_{i\in \indsetz{k}}$ from $\algfosubdif$ we can write $g_k(z,v_k)^T$ as
\begin{align*}
g_k(z,v_k)^T
=
\matcol{W_2}{k}^T
\left (
\amatpm{k}
+
\sum_{i \in \indsetz{k}}
h'( \bar x_i^T v_k )
\nabla \ell_i \bar x_i^T
\right )
=
\matcol{W_2}{k}^T
\left (
\sum_{i \in \indsetz{k}}
\Big (h'( \bar x_i^T v_k ) - s_i^* \Big )
\nabla \ell_i \bar x_i^T
\right ).
\end{align*}
For any $i \in \indsetz{k}$, $h'( \bar x_i^T v_k ) \in \{ s_-, s_+ \}$ changes whenever the sign of $\bar x_i^T v_k$ changes.
Every $i \in \indsetz{k}$ bisects $\reals^{d_x+1}$ into two halfspaces, $\bar x_i^T v_k \geq 0$ and $\bar x_i^T v_k \leq 0$, in each of which $h'( \bar x_i^T v_k )$ stays constant.
Note that by Lemma~\ref{lem:datapoints}, $\bar x_i$'s for $i \in B_k$ are linearly independent.
So, given $M_k$ linearly independent $\bar x_i$'s, they divide the space $\reals^{d_x+1}$ of $v_k$ into $2^{M_k}$ polyhedral cones.

Since $g_k(z,v_k)^T$ is constant in each polyhedral cone, we can 
let $\sigma_{i} \in \{-1,+1\}$ for all $i \in \indsetz{k}$, and define an LP for each $\{\sigma_i\}_{i \in B_k} \in \{-1,+1\}^{M_k}$:
\begin{equation*}
\tag{\ref{eq:LP2}}
\begin{array}{ll}
\minimize\limits_{v_k} & 
\matcol{W_2}{k}^T
\left ( 
\sum_{i \in \indsetz{k}}
(s_{\sigma_i} - s_i^*)
\nabla \ell_i
\bar x_i^T
\right )
v_k
\\
\subjectto & 
v_k \in \mc V_k, ~~~
\sigma_i \bar x_i^T v_k \geq 0,
~\forall i \in \indsetz{k}.
\end{array}
\end{equation*}
Solving these LPs and checking if the minimum value is $0$ suffices to prove $g_k(z,v_k)^T v_k \geq 0$
for all small enough perturbations.
Recall that $\mc V_k \defeq \spann \{ \bar x_i \mid i \in \indsetz{k} \}$ and $\dim(\mc V_k) = M_k$.
Note that any component of $v_k$ that is orthogonal to $\mc V_k$ is also orthogonal to $g_k(z,v_k)$, so it does not affect the objective function of any LP~\eqref{eq:LP2}.
Thus, the constraint $v_k \in \mc V_k$ is added to the LP~\eqref{eq:LP2}, which is equivalent to adding $d_x + 1 - M_k$ linearly independent equality constraints.
The feasible set of LP~\eqref{eq:LP2} has $d_x+1$ linearly independent equality/inequality constraints, which implies that the feasible set is a pointed polyhedral cone with vertex at origin.
Since any point in a pointed polyhedral cone is a conical combination (linear combination with nonnegative coefficients) of \emph{extreme rays} of the cone, checking nonnegativity of the objective function for all \emph{extreme rays} suffices. We emphasize that we \emph{do not} solve the LPs~\eqref{eq:LP2} in our algorithm; we just check the extreme rays.

\paragraph{Computational efficiency.}
Extreme rays of a pointed polyhedral cone in $\reals^{d_x+1}$ are computed from $d_x$ linearly independent active constraints.
Line~\ref{line:extray} of Algorithm~\ref{alg:fo-inc-test} is exactly computing such extreme rays: $\hat v_{i,k} \in \mc V_k \cap \spann \{\bar x_j \mid j \in B_k \setminus \{i\}\}^\perp$ for each $i \in B_k$, tested in both directions.

Note that there are $2 M_k$ extreme rays, and one extreme ray $\hat v_{i,k}$ is shared by $2^{M_k - 1}$ polyhedral cones. Moreover, $\bar x_j^T \hat v_{i,k} = 0$ for $j \in B_k \setminus \{i\}$, 
which indicates that
\begin{equation*}
g_k(z, \hat v_{i,k})^T \hat v_{i,k}
=
(s_{\sigma_{i,k}} - s_i^*)
\matcol{W_2}{k}^T
\nabla \ell_i
\bar x_i^T
\hat v_{i,k},
\text{ where }
\sigma_{i,k} = \sign(\bar x_i^T \hat v_{i,k}),
\end{equation*}
regardless of $\{\sigma_j\}_{j \in B_k \setminus \{i\}}$.
This observation is used in Lines~\ref{line:pt22fornegst} and \ref{line:pt22forzerost} of Algorithm~\ref{alg:fo-inc-test}.
Testing $g_k(z,\tilde v_k)^T \tilde v_k \geq 0$ for an extreme ray $\tilde v_k$ can be done with a \emph{single} inequality test instead of $2^{M_k - 1}$ separate tests for all cones!
Thus, this extreme ray approach instead of solving individual LPs greatly reduces computation, from $O(2^{M_k})$ to $O(M_k)$.

\paragraph{Algorithm operation in detail.}
\label{prg:proof-inc-alg}
Testing all possible extreme rays is exactly what $\algfoinc$ in Algorithm~\ref{alg:fo-inc-test} is doing.
Output of $\algfoinc$ is a tuple of three items: a boolean, a $(d_x+1)$-dimensional vector, and a tuple of $M_k$ sets. Whenever we have a descent direction, it returns \vars{True} and the descent direction $\tilde v_k$. If there is no descent direction, it returns \vars{False} and the sets $\{S_{i,k}\}_{i \in \indsetz{k}}$.

For both direction of extreme rays $\tilde v_k = \hat v_{i,k}$ and $\tilde v_k = -\hat v_{i,k}$ (Line~\ref{line:pt22for}), we check if 
$g_k(z, \tilde v_k)^T \tilde v_k \geq 0$. 
Whenever it does not hold (Lines~\ref{line:pt22fornegst}--\ref{line:pt22forneged}), $\tilde v_k$ is a descent direction, so $\algfoinc$ returns it with \vars{True}. Line~\ref{line:inc-ret} of Algorithm~\ref{alg:test} uses that $\tilde v_k$ to return perturbations, so that scaling by small enough $\gamma>0$ will give us a point with $\emprisk (z + \gamma \eta) < \emprisk (z)$. 
If equality holds (Lines~\ref{line:pt22forzerost}--\ref{line:pt22forzeroed}), this means $\tilde v_k$ is a direction of perturbation satisfying $g(z,\eta)^T \eta = 0$, so this direction needs to be checked if $\eta^T H(z,\eta) \eta \geq 0$ too. In this case, we add the sign of boundary data point $\bar x_i$ to $S_{i,k}$ for future use in the second-order test. The operation with $S_{i,k}$ will be explained in detail in Appendix~\ref{sec:proof-so}. After checking if $g_k(z, \tilde v_k)^T \tilde v_k \geq 0$ holds for all extreme rays, $\algfoinc$ returns \vars{False} with $\{S_{i,k}\}_{i \in \indsetz{k}}$.

\paragraph{Counting flat extreme rays.}
\label{prg:proof-inc-cou}
How many of these extreme rays satisfy $g_k(z, \tilde v_k)^T \tilde v_k = 0$? Presence of such flat extreme rays introduce inequality constraints in the QP that we will solve in $\algso$ (Algorithm~\ref{alg:so-test}). It is ideal not to have flat extreme rays, because in this case there are only equality constraints, so the QP is easier to solve.
The following lemma shows conditions for existence of flat extreme rays as well as output of Algorithm~\ref{alg:fo-inc-test}.
\begin{lemma}
	\label{lem:extrays}
	Suppose $\zeros{d_x+1}^T \in \mc D_k$ and all extreme rays $\tilde v_k$ satisfy $g_k(z, \tilde v_k)^T \tilde v_k \geq 0$.
	Consider all $i \in B_k$, and its corresponding $\hat v_{i,k} \in \mc V_k \cap \spann \{ \bar x_j \mid j \in B_k \setminus \{i\} \}^\perp$.
	\begin{enumerate}
		\item If $\matcol{W_2}{k}^T \nabla \ell_i = 0$, then both extreme rays $\hat v_{i,k}$ and $-\hat v_{i,k}$ are flat extreme rays, and $S_{i,k} = \{-1,+1\}$ at the end of Algorithm~\ref{alg:fo-inc-test}.
		\item If $\matcol{W_2}{k}^T \nabla \ell_i \neq 0$ and $s_i^* = s_+$ (or $s_-$), one (and only) of $\tilde v_k \in \{\hat v_{i,k}, -\hat v_{i,k}\}$ that satisfies $\sign(\bar x_i^T \tilde v_k) = +1$ (or $-1$) is a flat extreme ray, and $S_{i,k} = \{+1\}$ (or $\{-1\}$) at the end of Algorithm~\ref{alg:fo-inc-test}.
		\item If $\matcol{W_2}{k}^T \nabla \ell_i \neq 0$ and $s_i^* \neq s_\pm$, both $\hat v_{i,k}$ and $-\hat v_{i,k}$ are not flat extreme rays, and $S_{i,k} = \{0\}$ at the end of Algorithm~\ref{alg:fo-inc-test}.
	\end{enumerate}
\end{lemma}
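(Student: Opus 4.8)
The plan is to reduce the entire statement to the behaviour of a single scalar test value along the two directions $\pm\hat v_{i,k}$, and then to carry out a short sign analysis driven by the standing hypothesis that every extreme ray is nonnegative. First I would recall the identity already derived in the ``computational efficiency'' discussion: since $\bar x_j^T\hat v_{i,k}=0$ for every $j\in B_k\setminus\{i\}$,
\[
g_k(z,\hat v_{i,k})^T\hat v_{i,k}=(s_{\sigma_{i,k}}-s_i^*)\,\matcol{W_2}{k}^T\nabla\ell_i\,\bar x_i^T\hat v_{i,k},\qquad \sigma_{i,k}=\sign(\bar x_i^T\hat v_{i,k}).
\]
Writing $a_i\defeq\matcol{W_2}{k}^T\nabla\ell_i$ (a scalar) and $c\defeq\bar x_i^T\hat v_{i,k}$, I would first check that $c\neq 0$: if $c=0$ then $\hat v_{i,k}$ is orthogonal to every $\bar x_j$, $j\in B_k$, hence orthogonal to $\mc V_k$; but $\hat v_{i,k}\in\mc V_k$, so $\hat v_{i,k}=\zeros{}$ by the linear independence in Lemma~\ref{lem:datapoints}, a contradiction. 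Fixing the orientation so that $c>0$, the ray $\hat v_{i,k}$ carries $\sigma=+1$ (so $s_\sigma=s_+$) and $-\hat v_{i,k}$ carries $\sigma=-1$ (so $s_\sigma=s_-$), and the two test values become
\[
T_+=(s_+-s_i^*)\,a_i\,c,\qquad T_-=(s_i^*-s_-)\,a_i\,c.
\]

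Next I would split on whether $a_i=0$. If $a_i=0$ then $T_+=T_-=0$, so both rays are flat and the loop of Algorithm~\ref{alg:fo-inc-test} records $\sign(\bar x_i^T(\pm\hat v_{i,k}))=\pm1$, giving $S_{i,k}=\{-1,+1\}$; this is case~1. If $a_i\neq 0$, set $\mu_+\defeq s_+-s_i^*$ and $\mu_-\defeq s_i^*-s_-$, so that $T_+=\mu_+a_ic$ and $T_-=\mu_-a_ic$. The box constraint $\min\{s_-,s_+\}\le s_i^*\le\max\{s_-,s_+\}$ forces $\mu_+,\mu_-$ to share a common sign (both $\ge0$ when $s_-<s_+$, both $\le0$ when $s_+<s_-$), with $\mu_\pm=0$ precisely when $s_i^*$ hits the corresponding endpoint. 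Flatness can then be read off directly from whether $s_i^*$ is an endpoint.

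The crux — and the only place the hypothesis does real work — is upgrading the weak inequalities $T_+\ge0$, $T_-\ge0$ (which hold because both orientations are extreme rays) into the exact flat-versus-strict split. Dividing out $c>0$, the hypothesis reads $\mu_+a_i\ge0$ and $\mu_-a_i\ge0$. When $s_i^*$ is interior ($s_i^*\neq s_\pm$), both $\mu_+,\mu_-$ are nonzero; combined with $a_i\neq0$ the two nonnegative products are in fact strictly positive, so neither ray is flat, $S_{i,k}$ stays empty and is reset to $\{0\}$ — case~3. When $s_i^*=s_+$ (resp.\ $s_-$) we have $\mu_+=0$ (resp.\ $\mu_-=0$) while the other coefficient equals $s_+-s_-\neq0$; the vanishing factor makes exactly the $\sigma=+1$ (resp.\ $\sigma=-1$) ray flat, and on the remaining ray $\mu a_i$ is a nonzero quantity forced to be $\ge0$, hence strictly positive, so that ray is not flat. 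Thus exactly one direction is flat and $S_{i,k}=\{+1\}$ (resp.\ $\{-1\}$) — case~2. I expect the main obstacle to be purely bookkeeping: keeping the correspondence among the two orientations, the labels $s_+/s_-$, and the sign deposited in $S_{i,k}$ consistent, and in particular invoking $a_i\neq0$ at exactly the right moment to promote the hypothesis's ``$\ge0$'' to the strict inequality that certifies the non-flat ray.
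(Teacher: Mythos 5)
Your proposal is correct and follows essentially the same route as the paper's proof: reduce to the single-inequality identity $g_k(z,\hat v_{i,k})^T\hat v_{i,k}=(s_{\sigma_{i,k}}-s_i^*)\matcol{W_2}{k}^T\nabla\ell_i\,\bar x_i^T\hat v_{i,k}$, note $\bar x_i^T\hat v_{i,k}\neq 0$ via Lemma~\ref{lem:datapoints}, and case-split on which of the three factors vanishes. Your treatment is if anything slightly more explicit than the paper's (in showing the non-flat direction in case~2 and in tracking what the algorithm deposits into $S_{i,k}$), but the underlying argument is identical.
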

\begin{proof}
	First note that we already assumed that all extreme rays $\tilde v_k$ satisfy $g_k(z, \tilde v_k)^T \tilde v_k \geq 0$, so $\algmain$ will reach Line~\ref{line:inc-ed} at the end.
	Also note that $\bar x_i$'s in $i \in B_k$ are linearly independent (by Lemma~\ref{lem:datapoints}), so $\bar x_i ^T \hat v_{i,k} \neq 0$.
	
	If $\matcol{W_2}{k}^T \nabla \ell_i = 0$, then $(s_{\sigma_{i,k}} - s_i^*)\matcol{W_2}{k}^T \nabla \ell_i \bar x_i^T \tilde v_k = 0$ regardless of $\tilde v_k$, so both $\hat v_{i,k}$ and $-\hat v_{i,k}$ are flat extreme rays.
	If $\matcol{W_2}{k}^T \nabla \ell_i \neq 0$ and $s_i^* = s_+$, $\tilde v_k \in \{\hat v_{i,k}, -\hat v_{i,k}\}$ that satisfies $\sign(\bar x_i^T \tilde v_k) = +1$ gives $\sigma_{i,k} = +1$, so $s_{\sigma_{i,k}} = s_i^*$. Thus, $\tilde v_k$ is a flat extreme ray. The case with $s_i^* = s_-$ is proved similarly.
	If $\matcol{W_2}{k}^T \nabla \ell_i \neq 0$ and $s_i^* \neq s_+$, none of $(s_{\pm} - s_i^*)$, $\matcol{W_2}{k}^T \nabla \ell_i$, and $\bar x_i ^T \hat v_{i,k}$ are zero, so $\hat v_{i,k}$ and $-\hat v_{i,k}$ cannot be flat.
\end{proof}
Let $B^{(j)}_k \subseteq B_k$ denote the set of indices $i \in B_k$ satisfying conditions in Lemma~\ref{lem:extrays}.$j$ ($j = 1,2,3$).
Note that $B^{(j)}_k$'s partition the set $B_k$.
We denote the union of $B^{(1)}_k$ and $B^{(2)}_k$ by $B^{(1,2)}_k$, and similarly, $B^{(2,3)}_k \defeq B^{(2)}_k \cup B^{(3)}_k$.
We can see from the lemma that $|S_{i,k}| = 2$ for $i \in B^{(1)}_k$, and $|S_{i,k}| = 1$ for $i \in B^{(2,3)}_k$. 
Also, it follows from the definition of $K$ and $L$~\eqref{eq:defKL} that
\begin{equation*}
K = \sum_{k=1}^{d_h} | B^{(1)}_k |,~~
L = \sum_{k=1}^{d_h} | B^{(1)}_k | + | B^{(2)}_k |.
\end{equation*}

\paragraph{Connection to KKT conditions.}
As a side remark, we provide connections of our tests to the well-known KKT conditions.
Note that the equality 
$g_k(z,v_k)^T = 
\matcol{W_2}{k}^T
\left ( 
\sum_{i \in \indsetz{k}}
(s_{\sigma_i} - s_i^*)
\nabla \ell_i
\bar x_i^T
\right )$ for $\sigma_i \bar x_i^T v_k \geq 0$, $\forall i \in \indsetz{k}$
corresponds to the KKT stationarity condition, where $(s_{\sigma_i} - s_i^*) \matcol{W_2}{k}^T \nabla \ell_i$'s correspond to the Lagrange multipliers for inequality constraints. Then, testing extreme rays is equivalent to testing dual feasibility of Lagrange multipliers, and having zero dual variables ($\matcol{W_2}{k}^T \nabla \ell_i = 0$ or $s_i^* = s_+$ or $s_-$, resulting in flat extreme rays) corresponds to having degeneracy in the complementary slackness condition.

As mentioned in Section~\ref{sec:keyc}, given that $g(z, \eta)$ and $H(z, \eta)$ are constant functions of $\eta$ in each polyhedral cone, one can define inequality constrained optimization problems and try to solve for KKT conditions for $z$ directly. However, this also requires solving $2^M$ problems.
The strength of our approach is that by solving the QPs~\eqref{eq:algopt1}, we can automatically compute the exact Lagrange multipliers for all $2^M$ subproblems, and dual feasibility is also tested in $O(M)$ time.

\subsection{Testing $\eta^T H(z,\eta) \eta \geq 0$ for $\{ \eta \mid g(z,\eta)^T \eta = 0 \}$ (lines~\ref{line:so-st}--\ref{line:so-ed})}
\label{sec:proof-so}
The second-order test checks $\eta^T H(z,\eta) \eta \geq 0$ for ``flat'' $\eta$'s satisfying $g(z,\eta)^T \eta = 0$.
This is done with help of the function $\algso$ in Algorithm~\ref{alg:so-test}.
Given its input $\{\sigma_{i,k}\}_{k\in[d_h], i\in \indsetz{k}}$, it defines fixed ``Jacobian'' matrices $J_i$ for all data points and equality/inequality constraints for boundary data points, and solves the QP~\eqref{eq:algopt2}.

\paragraph{Equality/inequality constraints.}
\label{prg:proof-so-const}
In the QP~\eqref{eq:algopt2}, there are $d_h$ equality constraints of the form $\matcol{W_2}{k}^T u_k = \begin{bmatrix} \matrow{W_1}{k} & \veccomp{b_1}{k} \end{bmatrix} v_k$.
These equality constraints are due to the nonnegative homogeneous property of activation function $h$: scaling $\matrow{W_1}{k}$ and $\veccomp{b_1}{k}$ by $\alpha > 0$ and scaling $\matcol{W_2}{k}$ by $1/\alpha$ yields exactly the same network.
This observation is stated more precisely in the following lemma.
\begin{lemma}
	\label{lem:scaleinvar}
	Suppose $z$ is a FOSP (differentiable or not) of $\emprisk(\cdot)$. Fix any $k \in [d_h]$, and define perturbation $\eta$ as
	\begin{equation*}
	u_k = -\matcol{W_2}{k},~
	v_k = \begin{bmatrix} \matrow{W_1}{k} & \veccomp{b_1}{k} \end{bmatrix}^T,~
	u_{k'} = \zeros{}, v_{k'} = \zeros{} \text{ for all } k' \neq k,~\delta_2 = \zeros{}.
	\end{equation*}
	Then, $g(z, \eta)^T \eta = \eta^T H(z, \eta) \eta = 0$.
\end{lemma}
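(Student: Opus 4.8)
The plan is to recognize $\eta$ as the velocity vector of the scale-invariance curve passing through $z$ and to exploit the positive-homogeneity identity $h'(t)\,t = h(t)$, which holds for every ReLU-like $h$ and every $t \in \reals$ (for $t \neq 0$ it is immediate from $h(t) = s_\pm t$ and $h'(t) = s_\pm$, and for $t = 0$ both sides vanish). Rather than differentiating the curve explicitly, I would directly substitute the prescribed $\eta$ into the closed-form expressions for $g(z,\eta)^T \eta$ and $\eta^T H(z,\eta) \eta$ supplied by Lemma~\ref{lem:expansion}, and reduce everything to computing the perturbed outputs $\pert Y_1(x_i)$ and $\pert Y_2(x_i)$.

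First I would compute $\pert Y_1(x)$. Since only the $k$-th column of $\Delta_2$ and the $k$-th row of $\begin{bmatrix} \Delta_1 & \delta_1 \end{bmatrix}$ are nonzero and $\delta_2 = \zeros{}$, the term $\Delta_2 O(x)$ collapses to $-\matcol{W_2}{k} \veccomp{O(x)}{k}$, while $W_2 J(x)(\Delta_1 x + \delta_1)$ collapses to $\matcol{W_2}{k}\, \matent{J(x)}{k}{k}\, \veccomp{W_1 x + b_1}{k}$, using $v_k^T \bar x = \veccomp{W_1 x + b_1}{k}$. The homogeneity identity, together with the fact that $\veccomp{W_1 x_i + b_1}{k} = 0$ on boundary points (which makes this product vanish regardless of the direction-dependent value of $\matent{J(x)}{k}{k}$), yields $\matent{J(x)}{k}{k} \veccomp{W_1 x + b_1}{k} = \veccomp{O(x)}{k}$. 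Hence $\pert Y_1(x) = \zeros{}$ for every $x$. This instantly gives $g(z,\eta)^T \eta = \sum_i \nabla \ell_i^T \pert Y_1(x_i) = 0$ and annihilates the loss-curvature term $\half \sum_i \npsd{\pert Y_1(x_i)}{\nabla^2 \ell_i}^2$ in the Hessian.

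It remains to handle the cross term $\sum_i \nabla \ell_i^T \pert Y_2(x_i)$. The same reductions give $\pert Y_2(x) = \Delta_2 J(x)(\Delta_1 x + \delta_1) = -\matcol{W_2}{k} \veccomp{O(x)}{k}$, so this term equals $-\matcol{W_2}{k}^T \bigl( \sum_i \nabla \ell_i \veccomp{O(x_i)}{k} \bigr)$. Here is exactly where the FOSP hypothesis enters: because $\emprisk$ is smooth in $W_2$, projecting $\zeros{} \in \partial_z \emprisk$ onto the $W_2$-coordinates forces $\sum_i \nabla \ell_i O(x_i)^T = \zeros{}$ (this is precisely the $W_2$-stationarity condition tested on Line~\ref{line:l2-fosp-st}); its $k$-th column $\sum_i \nabla \ell_i \veccomp{O(x_i)}{k}$ is therefore $\zeros{d_y}$, and the cross term vanishes. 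Combining the two displays gives $\eta^T H(z,\eta) \eta = 0$, completing the proof.

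The main obstacle is conceptual rather than computational: the vanishing of $g(z,\eta)^T \eta$ and of the loss-curvature term is a pure consequence of homogeneity and needs no stationarity, but the cross term $\sum_i \nabla \ell_i^T \pert Y_2(x_i)$ genuinely relies on first-order stationarity in $W_2$ and is nonzero at a generic point. A secondary point requiring care is the boundary data: one must check that the identity $\matent{J(x)}{k}{k} \veccomp{W_1 x + b_1}{k} = \veccomp{O(x)}{k}$ survives the direction-dependent definition of $\matent{J(x)}{k}{k}$ on boundary points, which it does precisely because the pre-activation is zero there.
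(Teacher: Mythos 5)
Your proposal is correct and follows essentially the same route as the paper's proof in Appendix~\ref{sec:pflemscaleinvar}: show $\pert Y_1(x_i) = \zeros{}$ via the homogeneity identity $h'(t)\,t = h(t)$ (which kills the first-order term and the loss-curvature term), then reduce the remaining cross term to $-\bigl(\sum_i \veccomp{O(x_i)}{k} \nabla \ell_i^T\bigr)\matcol{W_2}{k}$ and invoke the $W_2$-stationarity condition $\sum_i \nabla \ell_i O(x_i)^T = \zeros{}$. Your explicit remark that the identity survives at boundary data points because the pre-activation vanishes there is a point the paper leaves implicit, but the argument is the same.
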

The proof of Lemma~\ref{lem:scaleinvar} can be found in Appendix~\ref{sec:pflemscaleinvar}. 
A corollary of this lemma is that any differentiable FOSP of $\emprisk$ always has rank-deficient Hessian, and the multiplicity of zero eigenvalue is at least $d_h$.
Hence, these $d_h$ equality constraints on $u_k$'s and $v_k$'s force $\eta$ to be orthogonal to the loss-invariant directions. 

The equality constraints of the form $\bar x_i^T v_k = 0$ are introduced when $\sigma_{i,k} = 0$; this happens for boundary data points $i \in B^{(3)}_k$. Therefore, there are $M - L$ additional equality constraints.
The inequality constraints come from $i \in B^{(1,2)}_k$. So there are $L$ inequality constraints. 
Now, the following lemma proves that feasible sets defined by these equality/inequality constraints added to \eqref{eq:algopt2} exactly correspond to the regions where $g_k(z,v_k)^T v_k = 0$.
Recall from Lemma~\ref{lem:extrays} that $S_{i,k} = \{-1, +1\}$ for $i \in B^{(1)}_k$, $S_{i,k} = \{-1\}$ or $\{+1\}$ for $i \in B^{(2)}_k$, and $S_{i,k} = \{0\}$ for $i \in B^{(3)}_k$.
\begin{lemma}
	\label{lem:feassets}
	Let $\{\sigma_{i,k}\}_{i \in B^{(2)}_k}$ be the only element of $\prod\nolimits_{i \in B^{(2)}_k} S_{i,k}$. Then, in $\algso$, 
	\begin{align*}
	&\bigcup_{\{\sigma_{i,k}\}_{i \in B^{(1)}_k} \in \prod_i S_{i,k}} 
	\left \{ v_k \mid \forall i \in B^{(3)}_k, \bar x_i^T v_k = 0, \text{ and } \forall i \in B^{(1,2)}_k , \sigma_{i,k} \bar x_i^T v_k \geq 0 \right \}\\
	=
	&\left \{ v_k \mid \forall i \in B^{(3)}_k, \bar x_i^T v_k = 0, \text{ and } \forall i \in B^{(2)}_k, \sigma_{i,k} \bar x_i^T v_k \geq 0 \right \}\\
	=
	&\left \{v_k \mid g_k(z, v_k)^T v_k = 0 \right\}.
	\end{align*}
\end{lemma}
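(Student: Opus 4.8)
The plan is to prove the two equalities separately, working throughout with the per-term factorization of $g_k(z,v_k)^T v_k$. Writing $a_i \defeq \matcol{W_2}{k}^T \nabla\ell_i \in \reals$ and $t_i \defeq \bar x_i^T v_k$, and using the reduced form of $g_k$ after substituting the optimal $\{s_i^*\}$ (derived in Appendix~\ref{sec:proof-inc}) together with $\matcol{W_2}{k}^T \nabla\ell_i \bar x_i^T = a_i \bar x_i^T$, I obtain
\[
g_k(z,v_k)^T v_k = \sum_{i \in B_k} \big(h'(t_i) - s_i^*\big)\, a_i\, t_i .
\]
I would first note that $g_k(z,v_k)$ is a linear combination of the $\bar x_i$'s and hence lies in $\mc V_k$; consequently $g_k(z,v_k)^T v_k$ and every constraint $\bar x_i^T v_k = 0$ or $\sigma_{i,k}\bar x_i^T v_k \ge 0$ depends only on the projection of $v_k$ onto $\mc V_k$. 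Thus all three sets in the statement are invariant under adding vectors orthogonal to $\mc V_k$, so it suffices to verify the equalities on $\mc V_k$, and no explicit constraint $v_k \in \mc V_k$ is needed.

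For the first equality, the key observation is that the $B^{(3)}_k$-equalities and the $B^{(2)}_k$-inequalities (whose signs $\sigma_{i,k}$ are fixed) do not involve the signs $\{\sigma_{i,k}\}_{i\in B^{(1)}_k}$ ranged over in the union, so they factor out. It then remains to show that a given $v_k$ admits \emph{some} assignment $\{\sigma_{i,k}\}_{i\in B^{(1)}_k} \in \{-1,+1\}^{|B^{(1)}_k|}$ with $\sigma_{i,k}\bar x_i^T v_k \ge 0$ for all $i \in B^{(1)}_k$. This always holds: take $\sigma_{i,k} = \sign(\bar x_i^T v_k)$ for each $i$ (either sign if $\bar x_i^T v_k = 0$). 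Hence the union over the $B^{(1)}_k$-signs simply deletes the $B^{(1)}_k$-constraints and collapses the left set onto the middle set.

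The substance is the second equality, which I would obtain by showing that each summand above is nonnegative and vanishes precisely on its associated constraint. For $i \in B^{(1)}_k$ the summand is identically $0$ because $a_i = 0$, consistent with there being no $B^{(1)}_k$-constraint. For $i \in B^{(2,3)}_k$ I isolate the $i$-th summand by evaluating at the extreme ray $\hat v_{i,k} \in \mc V_k \cap \spann\{\bar x_j \mid j \in B_k \setminus \{i\}\}^\perp$: since $\bar x_j^T \hat v_{i,k} = 0$ for $j \ne i$, only the $i$-th term survives. Because the increasing test has passed, $g_k(z,\tilde v_k)^T \tilde v_k \ge 0$ on every extreme ray, in particular on both $\hat v_{i,k}$ and $-\hat v_{i,k}$; evaluating there, and using $a_i \ne 0$ and $s_+ \ne s_-$, pins the signs. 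For $i \in B^{(3)}_k$ this yields $(s_+ - s_i^*)a_i > 0$ and $(s_- - s_i^*)a_i < 0$, so $(h'(t_i) - s_i^*)a_i t_i$ is strictly positive for $t_i \ne 0$ and zero iff $t_i = 0$, matching the constraint $\bar x_i^T v_k = 0$. For $i \in B^{(2)}_k$ with $s_i^* = s_+$ (the case $s_-$ being symmetric), the term is $0$ when $t_i \ge 0$ and, from $(s_- - s_+)a_i < 0$, strictly positive when $t_i < 0$; hence it vanishes iff $\sigma_{i,k}\bar x_i^T v_k \ge 0$ with $\sigma_{i,k} = +1 \in S_{i,k}$. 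Summing, $g_k(z,v_k)^T v_k$ is a sum of nonnegative terms, so it equals $0$ iff every term is $0$, i.e.\ iff all the $B^{(2)}_k$- and $B^{(3)}_k$-constraints hold---exactly the middle set.

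I expect the sign bookkeeping for $i \in B^{(2,3)}_k$ to be the main obstacle: one must combine the extreme-ray nonnegativity in \emph{both} orientations $\pm\hat v_{i,k}$ with the strictness coming from $a_i \ne 0$ and $s_+ \ne s_-$ to promote the weak inequalities into the strict, exact-zero-set statements for each summand. After that, both equalities reduce to routine set bookkeeping, and the invariance under $\mc V_k^\perp$ established at the outset removes any need to impose $v_k \in \mc V_k$ explicitly.
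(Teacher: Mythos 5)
Your proof is correct, and for the substantive second equality it takes a genuinely different route from the paper. The paper proves $\{v_k \mid g_k(z,v_k)^T v_k = 0\} = \mc U_2$ by first \emph{parametrizing} the zero set as $\{v_\perp + \sum_{i \in B_k^{(1)}}\alpha_i \hat v_{i,k} + \sum_{i\in B_k^{(2)}}\beta_i \hat v_{i,k}\}$ with $v_\perp \in \mc V_k^\perp$, $\alpha_i$ free and $\beta_i \ge 0$, and then establishing the two containments $\mc U_1 \subseteq \mc U_2$ and $\mc U_1^c \subseteq \mc U_2^c$ via a direct-sum decomposition of the ambient space; the validity of that parametrization is itself justified only by an appeal back to Lemma~\ref{lem:extrays}. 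You instead decompose the form itself, $g_k(z,v_k)^T v_k = \sum_{i\in \indsetz{k}} \bigl(h'(\bar x_i^T v_k) - s_i^*\bigr)\,\matcol{W_2}{k}^T\nabla\ell_i\,\bar x_i^T v_k$, and show termwise that each summand is nonnegative (using the passed extreme-ray tests in both orientations $\pm\hat v_{i,k}$ together with $a_i \ne 0$, $s_+ \ne s_-$, and $s_i^* \ne s_\pm$ for $i \in B_k^{(3)}$ to get strictness) and vanishes exactly on the corresponding constraint of $\mc U_2$; the zero set of a sum of nonnegative terms is then the intersection of the individual zero sets. Your route is more elementary and self-contained --- it sidesteps the parametrization step entirely and yields, as a byproduct, the global inequality $g_k(z,v_k)^T v_k \ge 0$ with an exact description of where equality holds --- while the paper's argument retains the explicit conical generator description of the feasible set, which ties in with the polyhedral-cone and extreme-ray machinery used elsewhere in the algorithm. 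Your handling of the first equality and of the $\mc V_k^\perp$-invariance matches the paper's.
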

The proof of Lemma~\ref{lem:feassets} is in Appendix~\ref{sec:pflemfeassets}.

In total, there are $d_h+M-L$ equality constraints and $L$ inequality constraints in each nonconvex QP. It is also easy to check that these constraints are all linearly independent.

\paragraph{How many QPs do we solve?}
\label{prg:proof-so-numqp}
Note that in Line~\ref{line:so-st}, we call $\algso$ with $\{\sigma_{i,k}\}_{k \in [d_h], i\in \indsetz{k}} = \zeros{}$, which results in a QP~\eqref{eq:algopt2} with $d_h + M$ equality constraints. This is done even when we have flat extreme rays, just to take a quick look if a descent direction can be obtained without having to deal with inequality constraints.

If there exist flat extreme rays (Line~\ref{line:so-if}), the algorithm calls $\algso$ for each element of $\prod_{k \in [d_h]} \prod_{i \in \indsetz{k}} S_{i,k}$. Recall that $|S_{i,k}|$ = 2 for $i \in B^{(1)}_k$, so 
\begin{equation*}
\left |\prod\nolimits_{k \in [d_h]} \prod\nolimits_{i \in \indsetz{k}} S_{i,k} \right | = 2^{K}.
\end{equation*}

In summary, if there is no flat extreme ray, the algorithm solves just one QP with $d_h + M$ equality constraints.
If there are flat extreme rays, the algorithm solves one QP with $d_h + M$ equality constraints, and $2^{K}$ QPs with $d_h + M - L$ equality constraints and $L$ inequality constraints. This is also an improvement from the naive approach of solving $2^M$ QPs.

\paragraph{Concluding the test.}
After solving the QP, $\algso$ returns result to $\algmain$. The algorithm returns two booleans and one perturbation tuple. The first is to indicate that there is no solution, i.e., there is a descent direction that leads to $-\infty$. Whenever there was any descent direction then we immediately return the direction and terminate. The second boolean is to indicate that there are nonzero $\eta$ that satisfies $\eta^T H(z,\eta) \eta = 0$.
After solving all QPs, if any of $\algso$ calls found out $\eta \neq \zeros{}$ such that $g(z,\eta)^T \eta = 0$ and $\eta^T H(z,\eta) \eta = 0$, then we conclude $\algmain$ with ``SOSP.'' If all QPs terminated with unique minimum at zero, then we can conclude ``Local Minimum.''

\section{Proof of lemmas}
\subsection{Proof of Lemma~\ref{lem:datapoints}}
\label{sec:pflemdatapoints}
By definition, we have $\matrow{W_1}{k}x_i + \veccomp{b_1}{k} = 0$ for all $i \in B_k$, meaning that they are all on the same hyperplane $\matrow{W_1}{k} x+ \veccomp{b_1}{k} = 0$. By the assumption, we cannot have more than $d_x$ points on the hyperplane.

Next, assume for the sake of contradiction that the $M_k \defeq |B_k|$ data points $\bar x_i$'s are linearly dependent, i.e., there exists $a_1, \dots, a_{M_k} \in \reals$, not all zero, such that
\begin{equation*}
\sum_{i = 1}^{M_k} a_i \begin{bmatrix} x_i \\ 1 \end{bmatrix}= 0 
\implies a_{1} = - \sum_{i = 2}^{M_k} a_i 
\implies \sum_{i=2}^{M_k} a_i (x_i - x_1) = 0,
\end{equation*}
where $a_2, \dots, a_{M_k}$ are not all zero. This implies that these $M_k$ points $x_i$'s are on the same $(M_k - 2)$-dimensional affine space. To see why, consider for example the case $M_k = 3$: $a_2(x_2 - x_1) = - a_3(x_3 - x_1)$, meaning that they have to be on the same line. By adding any $d_x + 1 - M_k$ additional $x_i$'s, we can see that $d_x + 1$ points are on the same $(d_x - 1)$-dimensional affine space, i.e., a hyperplane in $\reals^{d_x}$. This contradicts Assumption~\ref{asm:data}.

\subsection{Proof of Lemma~\ref{lem:expansion}}
\label{sec:pflemexpansion}
From Assumption~\ref{asm:loss}, $\ell(w,y)$ is twice differentiable and convex in $w$. 
By Taylor expansion of $\ell(\cdot)$ at $(Y(x_i), y_i)$, 
\begin{align*}
\emprisk( z+\eta )
&= \sum\nolimits_{i=1}^m \ell( Y(x_i) + \pert Y(x_i), y_i )\\
&= \sum\nolimits_{i=1}^m \ell(Y(x_i),y_i) + \nabla \ell_i ^T \pert Y(x_i) + \half \pert Y(x_i)^T \nabla^2 \ell_i \pert Y(x_i) + o(\norm{\eta}^2)\\
&= \emprisk(z) + \sum_{i=1}^m \nabla \ell_i ^T \pert Y_1(x_i) + \sum_{i=1}^m \nabla \ell_i ^T \pert Y_2(x_i) + \half \sum_{i=1}^m \npsd{\pert Y_1(x_i)}{\nabla^2 \ell_i}^2 + o(\norm{\eta}^2),
\end{align*}
where the first-order term $\sum\nolimits_{i=1}^m \nabla \ell_i ^T \pert Y_1(x_i) =  \sum\nolimits_{i=1}^m\nabla \ell_i ^T (\Delta_2 O(x_i) + \delta_2 + W_2 J(x_i) (\Delta_1 x_i + \delta_1))$ can be further expanded to show
\begin{align*}
&\sum\nolimits_{i=1}^m\nabla \ell_i ^T (\Delta_2 O(x_i) + \delta_2) 
=\dotprod
{\Delta_2}
{\sum\nolimits_{i} \nabla \ell_i O(x_i)^T } +
\dotprod
{\delta_2}
{\sum\nolimits_{i} \nabla \ell_i },\\
&\sum\nolimits_{i=1}^m \nabla \ell_i^T
\left (W_2 J(x_i) (\Delta_1 x_i+\delta_1) \right )
=
\tr \left(
\sum\nolimits_{i=1}^m
J(x_i) W_2^T \nabla \ell_i \bar x_i^T
\begin{bmatrix} \Delta_1^T \\ \delta_1^T \end{bmatrix}
\right)\\
&=
\sum_{k=1}^{d_h}
\matcol{W_2}{k}^T
\left (
\sum_{i=1}^m
\matent{J(x_i)}{k}{k}
\nabla \ell_i \bar x_i^T 
\right )
v_k
=
\sum_{k=1}^{d_h}
\matcol{W_2}{k}^T
\left (
\amatpm{k}
+
\sum\nolimits_{i \in \indsetz{k}}
h'( \bar x_i^T v_k )
\nabla \ell_i \bar x_i^T
\right )
v_k.
\end{align*}
Also, note that in each of the $2^M$ divided region (which is a polyhedral cone) of $\eta$, $J(x_i)$ stays constant for all $i \in [m]$; thus, $g(z,\eta)$ and $H(z,\eta)$ are piece-wise constant functions of $\eta$. Specifically, since the parameter space is partitioned into polyhedral cones, we have $g(z,\eta) = g(z,\gamma \eta)$ and $H(z,\eta) = H(z,\gamma \eta)$ for any $\gamma > 0$.

\subsection{Proof of Lemma~\ref{lem:eq-con-qp}}
\label{sec:pflemeq-con-qp}

Suppose that $w_1, w_2, \dots, w_q$ are orthonormal basis of $\rowsp(A)$. Choose $w_{q+1}, \dots, w_p$ so that $w_1, w_2, \dots, w_p$ form an orthonormal basis of $\reals^p$. Let $W$ be an orthogonal matrix whose columns are $w_1, w_2, \dots, w_p$, and $\hat W$ be an submatrix of $W$ whose columns are $w_{q+1}, \dots, w_p$.
With this definition, note that $I - A^T (AA^T)^{-1} A = \hat W \hat W^T$.

Suppose that we are given $\eta^{(t)}$ satisfying $A \eta^{(t)} = 0$. Then we can write $\eta^{(t)} = \hat W\mu^{(t)}$, where $\mu^{(t)} \in \reals^{p-q}$ and $\veccomp{\mu^{(t)}}{i} = w_{i+q}^T \eta^{(t)}$. Define $\mu^{(t+1)}$ likewise. Then, noting $\eta^{(t)} = \hat W \hat W^T \eta^{(t)}$ gives
\begin{equation*}
\hat W \mu^{(t+1)} = \eta^{(t+1)} = \eta^{(t)} - \alpha \hat W \hat W^T Q \hat W \mu^{(t)} = \hat W (I-\alpha \hat W^T Q \hat W) \mu^{(t)} .
\end{equation*}
Define $C \defeq \hat W^T Q \hat W \in \reals^{(p-q)\times(p-q)}$, and then write its eigen-decomposition $C = V S V^T$ and denote its eigenvectors as $\nu_1, \dots, \nu_{p-q}$ and its corresponding eigenvalues $\lambda_1, \dots, \lambda_{p-q}$.
Then note
\begin{align*}
\mu^{(t+1)} 
&= (I-\alpha C) \mu^{(t)} 
= (I-\alpha VSV^T) \sum_{i=1}^{p-q} (\nu_i^T \mu^{(t)}) \nu_i 
= \sum_{i=1}^{p-q} (1-\alpha \lambda_i) (\nu_i^T \mu^{(t)}) \nu_i\\
&= \sum_{i=1}^{p-q} (1-\alpha \lambda_i)^2 (\nu_i^T \mu^{(t-1)}) \nu_i 
= \cdots 
= \sum_{i=1}^{p-q} (1-\alpha \lambda_i)^{t+1} (\nu_i^T \mu^{(0)}) \nu_i.
\end{align*}
This proves that this iteration converges or diverges exponentially fast.
Starting from the initial point $\eta^{(0)} = \hat W \mu^{(0)}$, the component of $\mu^{(0)}$ that corresponds to negative eigenvalue blows up exponentially fast, those corresponding to positive eigenvalue shrinks to zero exponentially fast (if $\alpha<1/\eigmax(C)$), and those with zero eigenvalue will stay invariant.
Therefore, if there exists $\lambda_i < 0$, then $\eta^{(t)}$ blows up to infinity quickly and finds an $\eta$ such that $\eta^T Q \eta < 0$ \ref{item:qpc3}. If all $\lambda_i \geq 0$, it converges exponentially fast to
$\hat W \sum_{i: \lambda_i = 0} (\nu_i^T \mu^{(0)}) \nu_i$ \ref{item:qpc2}. If all $\lambda_i > 0$, $\eta^{(t)} \rightarrow \zeros{}$ \ref{item:qpc1}.

It is left to prove that $\alpha < 1/\eigmax(Q)$ guarantees convergence, as stated. To this end, it suffices to show that $\eigmax(Q) \geq \eigmax(C)$. Note that 
\begin{equation*}
C = \hat W^T Q \hat W = \hat W^T W W^T Q W W^T \hat W 
= \begin{bmatrix}
\zeros{} & I
\end{bmatrix} 
W^T Q W
\begin{bmatrix}
\zeros{} \\ I
\end{bmatrix}.
\end{equation*}
Using the facts that $\eigmax(Q) = \eigmax(W^T Q W)$ and $C$ is a principal submatrix of $W^T Q W$,
\begin{equation*}
\eigmax(Q) = \max_{x} \frac{x^T W^T Q W x}{x^T x} \geq \max_{x: \veccomp{x}{1:q} = \zeros{}} \frac{x^T W^T Q W x}{x^T x} = \eigmax(C).
\end{equation*}

Also, if we start at a random initial point (e.g., sample from a Gaussian in $\reals^p$ and project to $\rowsp(A)^\perp$), then with probability $1$ we have $\nu_i^T \mu^{(0)} \neq 0$ for all $i \in [p-q]$, so we will get the correct convergence/divergence result almost surely.

\subsection{Proof of Lemma~\ref{lem:ineq-con-qp}}
\label{sec:pflemineq-con-qp}

\subsubsection{Preliminaries}
Before we prove the complexity lemma, we introduce the definitions of copositivity and Pareto spectrum, which are closely related concepts to our specific form of QP.
\begin{definition}
	Let $Q \in \reals^{r \times r}$ be a symmetric matrix. We say that $Q$ is copositive if $\eta^T Q \eta \geq 0$ for all $\eta \geq \zeros{}$. Moreover, strict copositivity means that $\eta^T Q \eta > 0$ for all $\eta \geq \zeros{}$, $\eta \neq \zeros{}$.
\end{definition}
Testing whether $Q$ is not copositive known to be NP-complete \citep{murty1987some}; it is certainly a difficult problem. There is a method testing cositivity of $Q$ in $O(r^3 2^r)$ time which uses its Pareto spectrum $\Pi(Q)$.
The following is the definition of Pareto spectrum, taken from \citet{seeger1999eigenvalue, hiriart2010variational}.
\begin{definition}
	Consider the problem
	\begin{equation*}
	\minimize_{\eta \geq \zeros{}, \ltwo{\eta} = 1} ~~\eta^T Q \eta.
	\end{equation*}
	KKT conditions for the above problem gives us a complementarity system 
	\begin{equation}
	\label{eq:pareto}
	\eta \geq \zeros{}, ~~ Q \eta - \lambda \eta \geq \zeros{}, ~~\eta^T (Q \eta - \lambda \eta) = 0, \ltwo{\eta} = 1,
	\end{equation}
	where $\lambda \in \reals$ is viewed as a Lagrange multiplier associated with $\ltwo{\eta} = 1$. The number $\lambda \in \reals$ is called a Pareto eigenvalue of $Q$ if \eqref{eq:pareto} admits a solution $\eta$. The set of all Pareto eigenvalues of $Q$, denoted as $\Pi(Q)$, is called the Pareto spectrum of $Q$.
\end{definition}
The next lemma reveals the relation of copositivity and Pareto spectrum:
\begin{lemma}[Theorem~4.3 of \citet{hiriart2010variational}]
	A symmetric matrix $Q$ is copositive (or strictly copositive) if and only if all the Pareto eigenvalues of $Q$ are nonnegative (or strictly positive).
\end{lemma}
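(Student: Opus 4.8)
The plan is to reduce both copositivity and strict copositivity of $Q$ to the sign of a single scalar---the global minimum of the quadratic form over a compact set---and then to identify that scalar with the smallest Pareto eigenvalue. Let $\Sigma \defeq \{\eta \in \reals^r \mid \eta \geq \zeros{r},\ \ltwo{\eta} = 1\}$. Since $\eta^T Q \eta$ is homogeneous of degree two, $Q$ is copositive if and only if $\min_{\eta \in \Sigma} \eta^T Q \eta \geq 0$, and strictly copositive if and only if $\min_{\eta \in \Sigma} \eta^T Q \eta > 0$: the case $\eta = \zeros{r}$ is trivial, and any nonzero $\eta \geq \zeros{r}$ renormalizes into $\Sigma$ without changing the sign of $\eta^T Q\eta$. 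Because $\Sigma$ is compact and nonempty and the form is continuous, this minimum is attained; call a minimizer $\eta^*$ and set $\lambda^* \defeq \eta^{*T} Q \eta^*$.

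I would then pin $\lambda^*$ to the Pareto spectrum by proving two inequalities. The easy direction: if $\lambda \in \Pi(Q)$ with associated solution $\eta$ of the complementarity system~\eqref{eq:pareto}, then the complementarity equation $\eta^T(Q\eta - \lambda\eta) = 0$ combined with $\ltwo{\eta} = 1$ yields $\eta^T Q \eta = \lambda \ltwo{\eta}^2 = \lambda$; since that $\eta$ lies in $\Sigma$, we get $\lambda \geq \lambda^*$, hence $\min \Pi(Q) \geq \lambda^*$. The harder direction is $\lambda^* \in \Pi(Q)$, i.e., that $\eta^*$ itself solves~\eqref{eq:pareto} with eigenvalue $\lambda^*$. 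Rather than invoking an abstract constraint qualification, I would argue directly from optimality via the support $I \defeq \{i \mid \eta^*_i > 0\}$. On the coordinates in $I$ the constraints $\eta_i \geq 0$ are locally inactive, so $\eta^*$ minimizes $\eta^T Q \eta$ over the unit sphere of $\spann\{\unitvec_i \mid i \in I\}$; consequently $\eta^*_I$ is an eigenvector of the principal submatrix $Q_{II}$ with eigenvalue $\lambda^*$. This gives $(Q\eta^* - \lambda^*\eta^*)_i = 0$ for $i \in I$ (the eigenvector relation, which also supplies complementarity since $\eta^*$ vanishes off $I$), while for $j \notin I$ minimality against the one-sided feasible direction $\eta^* + t\unitvec_j$ (with $t>0$, renormalized) forces the first-order condition $(Q\eta^*)_j \geq 0$, i.e.\ $(Q\eta^* - \lambda^*\eta^*)_j \geq 0$ because $\eta^*_j = 0$. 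Thus $\eta^*$ meets every condition in~\eqref{eq:pareto}, so $\lambda^* \in \Pi(Q)$ and $\min \Pi(Q) \leq \lambda^*$.

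Combining the two inequalities gives $\min \Pi(Q) = \lambda^* = \min_{\eta \in \Sigma} \eta^T Q \eta$, and in particular establishes that $\Pi(Q)$ is nonempty. The characterization then follows immediately: all Pareto eigenvalues are nonnegative iff $\min \Pi(Q) \geq 0$ iff $\lambda^* \geq 0$ iff $Q$ is copositive; and all Pareto eigenvalues are strictly positive iff $\min \Pi(Q) > 0$ iff $\lambda^* > 0$ iff $Q$ is strictly copositive.

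I expect the main obstacle to be the inclusion $\lambda^* \in \Pi(Q)$---specifically, justifying the first-order conditions at $\eta^*$ cleanly, since a generic constraint qualification for the intersection of the nonnegative orthant and the unit sphere can be awkward on low-dimensional faces of $\Sigma$. The support-restriction argument is what sidesteps this difficulty: on the support $I$ the active sign constraints disappear and stationarity reduces to ordinary Rayleigh-quotient minimization (hence the eigenvector relation on $Q_{II}$), whereas off the support the only admissible perturbations are the outward directions $+\unitvec_j$, and minimality against these supplies exactly the nonnegativity $(Q\eta^*)_j \geq 0$ needed for the inequality $Q\eta^* - \lambda^*\eta^* \geq \zeros{r}$.
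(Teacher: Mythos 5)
Your proof is correct. The paper itself offers no proof of this lemma---it imports it wholesale as Theorem~4.3 of \citet{hiriart2010variational}---so your argument is a genuine addition rather than a variant of an in-paper derivation. What you have reconstructed is essentially the variational argument from the cited literature: reduce both notions of copositivity to the sign of $\lambda^* = \min_{\eta \in \Sigma} \eta^T Q \eta$ on the compact set $\Sigma = \{\eta \geq \zeros{r}, \ltwo{\eta}=1\}$, then show $\min \Pi(Q) = \lambda^*$. Both directions check out: complementarity plus $\ltwo{\eta}=1$ does give $\lambda = \eta^T Q \eta \geq \lambda^*$ for every Pareto eigenvalue, and your support-restriction argument for $\lambda^* \in \Pi(Q)$ is sound. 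In particular, the two delicate steps hold up under scrutiny: (i) points of the unit sphere of $\spann\{\unitvec_i \mid i \in I\}$ near $\eta^*$ have strictly positive coordinates on $I$ and so lie in $\Sigma$, making $\eta^*$ a local minimizer on a smooth manifold where stationarity is unconditional, whence $Q_{II}\eta^*_I = \lambda^* \eta^*_I$ with multiplier equal to the Rayleigh value (this works even when $|I|=1$, trivially); and (ii) for $j \notin I$, the renormalized curve $t \mapsto (\eta^* + t\unitvec_j)/\sqrt{1+t^2}$ stays in $\Sigma$ for small $t \geq 0$ and has one-sided derivative $2(Q\eta^*)_j$ at $t=0$, so minimality forces $(Q\eta^*)_j \geq 0$, which is exactly the missing coordinate of $Q\eta^* - \lambda^*\eta^* \geq \zeros{r}$ since $\eta^*_j = 0$. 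Your instinct that a generic constraint qualification would be awkward on low-dimensional faces of $\Sigma$ is right, and the support trick is the standard and correct way around it. A pleasant byproduct worth noting is the identity $\min \Pi(Q) = \min_{\eta \in \Sigma} \eta^T Q \eta$ itself, which simultaneously shows $\Pi(Q) \neq \emptyset$ and makes both equivalences (nonnegative Pareto spectrum $\Leftrightarrow$ copositive; positive Pareto spectrum $\Leftrightarrow$ strictly copositive) immediate, with the attainment of the minimum of $\Pi(Q)$ at $\lambda^*$ handling the strict case cleanly.
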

Now, the following lemma tells us how to compute Pareto spectrum of $Q$.
\begin{lemma}[Theorem~4.1 of \citet{seeger1999eigenvalue}]
	\label{lem:pareto}
	Let $Q$ be a matrix of order $r$. Consider a nonempty index set $J \subseteq [r]$. Given $J$, $Q^J$ refers to the principal submatrix of $Q$ with the rows and columns of $Q$ indexed by $J$. Let $2^{[r]} \setminus \emptyset$ denote the set of all nonempty subsets of $[r]$. Then $\lambda \in \Pi(Q)$ if and only if there exists an index set $J \in 2^{[r]} \setminus \emptyset$ and a vector $\xi \in \reals^{|J|}$ such that
	\begin{align*}
	Q^J \xi = \lambda \xi,~~ \xi \in \interior(\reals_+^{|J|}),~~ \sum_{j \in J} \matent{Q}{i}{j} \veccomp{\xi}{j} \geq 0 \text{ for all } i \notin J.
	\end{align*}
	In such a case, the vector $\eta \in \reals^r$ by
	\begin{equation*}
	\veccomp{\eta}{j} = 
	\begin{cases} 
	\veccomp{\xi}{j} & \text{if } j \in J, \\
	0 & \text{if } j \notin J
	\end{cases}
	\end{equation*}
	is a Pareto-eigenvector of $Q$ associated to the Pareto eigenvalue $\lambda$.
\end{lemma}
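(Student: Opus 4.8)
The plan is to unwind the definition of a Pareto eigenvalue directly into its componentwise form, without invoking any external machinery. Recall that $\lambda \in \Pi(Q)$ means there exists $\eta$ with $\ltwo{\eta}=1$ solving the complementarity system $\eta \geq \zeros{}$, $Q\eta - \lambda\eta \geq \zeros{}$, and $\eta^T(Q\eta - \lambda\eta) = 0$. The single observation that drives everything is that an inner product of two entrywise-nonnegative vectors vanishes if and only if each coordinatewise product is zero; applied to $\eta \geq \zeros{}$ and $Q\eta - \lambda\eta \geq \zeros{}$, this yields $\veccomp{\eta}{i}\,(Q\eta - \lambda\eta)_i = 0$ for every $i$. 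I would therefore split $[r]$ according to the support $J \defeq \{\, i : \veccomp{\eta}{i} > 0 \,\}$, which is nonempty since $\ltwo{\eta}=1$, and let $\xi$ be the restriction of $\eta$ to the coordinates in $J$, so that $\xi \in \interior(\reals_+^{|J|})$.

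For the ``only if'' direction I would read off the two families of conditions from this splitting. For $i \in J$ we have $\veccomp{\eta}{i} > 0$, so componentwise complementarity forces $(Q\eta - \lambda\eta)_i = 0$; since $\veccomp{\eta}{j}=0$ for $j \notin J$, the product $(Q\eta)_i = \sum_{j \in J}\matent{Q}{i}{j}\veccomp{\xi}{j}$ ranges only over $J$, and these vanishing rows become exactly $Q^J \xi = \lambda \xi$. For $i \notin J$ complementarity is automatic because $\veccomp{\eta}{i}=0$, and the surviving dual-feasibility inequality $(Q\eta - \lambda\eta)_i \geq 0$ reads $\sum_{j \in J}\matent{Q}{i}{j}\veccomp{\xi}{j} \geq 0$. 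Together with $\xi \in \interior(\reals_+^{|J|})$ these are precisely the three asserted conditions, and the original $\eta$ is recovered as the zero-extension of $\xi$.

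For the ``if'' direction I would run the same computation in reverse. Given $J$ and $\xi$ meeting the three conditions, define $\eta$ by the stated zero-extension. Then $\eta \geq \zeros{}$ and $\eta \neq \zeros{}$ because $\xi \in \interior(\reals_+^{|J|})$; the eigen-equation $Q^J\xi = \lambda\xi$ makes $(Q\eta - \lambda\eta)_i = 0$ on $J$, the sign conditions make $(Q\eta-\lambda\eta)_i \geq 0$ off $J$, and in $\eta^T(Q\eta-\lambda\eta)$ every summand vanishes (the factor $\veccomp{\eta}{i}$ is zero off $J$, while the matching factor $(Q\eta-\lambda\eta)_i$ is zero on $J$). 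Since all three defining relations are positively homogeneous in $\eta$, I can finally rescale to $\ltwo{\eta}=1$ without disturbing them, certifying $\lambda \in \Pi(Q)$ with Pareto-eigenvector $\eta$.

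I do not expect a serious obstacle, as the result is essentially a change of bookkeeping. The one point that genuinely needs care is the passage from the single scalar identity $\eta^T(Q\eta - \lambda\eta)=0$ to the $r$ coordinatewise identities, which must be justified through the nonnegativity of both factors rather than assumed. The secondary care-point is insisting that $J$ be the \emph{exact} support: a larger $J$ would break strict positivity $\xi \in \interior(\reals_+^{|J|})$, whereas a smaller one would leave an uncanceled row inside the complementarity condition.
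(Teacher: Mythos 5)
Your proof is correct. Note that the paper itself offers no proof of this lemma at all --- it is imported verbatim as Theorem~4.1 of \citet{seeger1999eigenvalue} with only a citation --- so there is nothing internal to compare against; your argument is, in substance, the standard proof of the cited theorem. The structure is sound: the key reduction from the scalar identity $\eta^T(Q\eta - \lambda\eta) = 0$ to the coordinatewise identities $\veccomp{\eta}{i}(Q\eta - \lambda\eta)_i = 0$ is correctly justified by the nonnegativity of both factors (a sum of nonnegative terms vanishing forces each term to vanish), the choice of $J$ as the exact support of $\eta$ is the right one and you correctly flag why neither a larger nor a smaller index set would work, and in the converse direction the rescaling to $\ltwo{\eta}=1$ is legitimate because the three defining relations of the complementarity system are positively homogeneous (of degrees $1$, $1$, and $2$ respectively), so normalization disturbs nothing. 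One small bookkeeping check you implicitly rely on and could state explicitly: for $i \in J$ the equation $(Q\eta)_i = \lambda \veccomp{\eta}{i}$ restricts to $\sum_{j \in J} \matent{Q}{i}{j}\veccomp{\xi}{j} = \lambda \veccomp{\xi}{i}$ precisely because the off-support coordinates of $\eta$ vanish, and collecting these rows over $i \in J$ is what produces the principal-submatrix eigen-equation $Q^J\xi = \lambda\xi$ --- you do say this, and it is correct. The proof is complete and self-contained.
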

These lemmas tell us that the Pareto spectrum of $Q$ can be calculated by computing eigensystems of all $2^r-1$ possible $Q^J$, which takes $O(r^3 2^r)$ time in total, and from this we can determine whether a symmetric $Q$ is copositive.

\subsubsection{Proof of the lemma}
With the preliminary concepts presented, we now start proving our Lemma~\ref{lem:ineq-con-qp}.
We will first transform $\eta$ to eliminate the equality constraints and obtain an inequality constrained problem of the form $\minimize_{w: \bar B w \geq 0} w^T R w$. From there, we can use the theorems from \citet{martin1981copositive}, which tell us that by testing positive definiteness of a $(p-q-r) \times (p-q-r)$ matrix and copositivity of a $r \times r$ matrix we can determine which of the three categories the QP falls into. 
Transforming $\eta$ and testing positive definiteness take $O(p^3)$ time and testing copositivity takes $O(r^3 2^r)$ time, so the test in total is done in $O(p^3 + r^3 2^r)$ time.

We now describe how to transform $\eta$ and get an equivalent optimization problem of the form we want.
We assume without loss of generality that $A = \begin{bmatrix} A_1 & A_2 \end{bmatrix}$ where $A_1 \in \reals^{q \times q}$ is invertible. If not, we can permute components of $\eta$. Then make a change of variables
\begin{equation*}
\eta = T_A \begin{bmatrix} \bar w \\ w \end{bmatrix} 
\defeq
\begin{bmatrix} A_1^{-1} & -A_1^{-1} A_2 \\ \zeros{(p-q) \times q} &  I_{p-q} \end{bmatrix}
\begin{bmatrix} \bar w \\ w \end{bmatrix},
\text{ so that }
A T_A \begin{bmatrix} \bar w \\ w \end{bmatrix}
= \begin{bmatrix} I & \zeros{} \end{bmatrix} \begin{bmatrix} \bar w \\ w \end{bmatrix} = \bar w.
\end{equation*}
Consequently, the constraint $A\eta = 0$ becomes $\bar w = 0$.
Now partition $B = \begin{bmatrix} B_1 & B_2 \end{bmatrix}$, where $B_1 \in \reals^{r \times q}$. Also let $R$ be the principal submatrix of $T_A^T Q T_A$ composed with the last $p-q$ rows and columns. It is easy to check that
\begin{equation*}
\begin{array}[t]{ll}
\minimize_{\eta} & \eta^T Q \eta
\\
\subjectto
&A \eta = \zeros{q}, ~B \eta \geq \zeros{r}.
\end{array}
\equiv
\begin{array}[t]{ll}
\minimize_{w} & w^T R w 
\\
\subjectto
&(B_2-B_1 A_1^{-1} A_2) w \geq \zeros{r}.
\end{array}
\end{equation*}
Let us quickly check if $B_2-B_1 A_1^{-1} A_2$ has full row rank.
One can observe that
\begin{equation*}
\begin{bmatrix} A_1 & A_2 \\ B_1 & B_2 \end{bmatrix}
=
\begin{bmatrix} I_q & \zeros{} \\ B_1 A_1^{-1} & I_r \end{bmatrix}
\begin{bmatrix} A_1 & A_2 \\ \zeros{} & B_2-B_1 A_1^{-1} A_2 \end{bmatrix}.
\end{equation*}
It follows from the assumption $\rank(\begin{bmatrix}A^T & B^T \end{bmatrix}) = q+r$ that $\bar B \defeq B_2-B_1 A_1^{-1} A_2$ has rank $r$, which means it has full row rank.

Before stating the results from \citet{martin1981copositive}, we will transform the problem a bit further.
Again, assume without loss of generality that $\bar B = \begin{bmatrix}
\bar B_1 & \bar B_2
\end{bmatrix}$ where $\bar B_1 \in \reals^{r \times r}$ is invertible.
Define another change of variables as the following:
\begin{equation*}
w = T_B \nu
\defeq
\begin{bmatrix} \bar B_1^{-1} & -\bar B_1^{-1} \bar B_2 \\ \zeros{(p-q-r) \times r} &  I_{p-q-r} \end{bmatrix}
\begin{bmatrix} \nu_1 \\ \nu_2 \end{bmatrix},~~
T_B^T R T_B \eqdef 
\begin{bmatrix} \bar R_{11} & \bar R_{12} \\ \bar R_{12}^T &  \bar R_{22} \end{bmatrix}
\eqdef \bar R.
\end{equation*}
Consequently, we get
\begin{equation*}
\begin{array}[t]{ll}
\minimize_{w} & w^T R w 
\\
\subjectto
&\bar B w \geq \zeros{r}.
\end{array}
\equiv
\begin{array}[t]{ll}
\minimize_{w} & 
\nu^T \bar R \nu =
\nu_1^T \bar R_{11} \nu_1 + 2 \nu_1^T \bar R_{12} \nu_2 + \nu_2^T \bar R_{22} \nu_2
\\
\subjectto
&\nu_1 \geq \zeros{r}.
\end{array}
\end{equation*}
Given this transformation, we are ready to state the lemmas.
\begin{lemma}[Theorem~2.2 of \citet{martin1981copositive}]
	\label{lem:martinstcp}
	If $\bar B = \begin{bmatrix} \bar B_1 & \bar B_2 \end{bmatrix}$, with $\bar B_1$ $r \times r$ invertible, then with $\bar R_{ij}$'s given as above,
	$w^T R w > 0$ whenever $\bar B w \geq \zeros{}$, $w \neq \zeros{}$ if and only if
	\begin{itemize}
		\item $\bar R_{22}$ is positive definite, and
		\item $\bar R_{11}- \bar R_{12} \bar R_{22}^{-1} \bar R_{12}^T$ is strictly copositive.
	\end{itemize}
\end{lemma}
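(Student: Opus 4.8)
The plan is to work entirely with the reduced problem already obtained above. After the substitution $w = T_B \nu$, the relation $\bar B T_B = \begin{bmatrix} I & \zeros{} \end{bmatrix}$ turns the constraint $\bar B w \geq \zeros{r}$ into $\nu_1 \geq \zeros{r}$, and $w^T R w = \nu^T \bar R \nu = \nu_1^T \bar R_{11}\nu_1 + 2\nu_1^T\bar R_{12}\nu_2 + \nu_2^T\bar R_{22}\nu_2$; since $T_B$ is invertible, $w \neq \zeros{}$ iff $\nu \neq \zeros{}$. So it suffices to show that $\nu^T\bar R\nu > 0$ for every nonzero $\nu$ with $\nu_1 \geq \zeros{r}$ holds if and only if $\bar R_{22}\succ 0$ and $S \defeq \bar R_{11} - \bar R_{12}\bar R_{22}^{-1}\bar R_{12}^T$ is strictly copositive. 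Both directions run off one identity, the Schur-complement completion of squares, which is legitimate once $\bar R_{22}$ is nonsingular:
$$\nu^T\bar R\nu = \nu_1^T S \nu_1 + \big(\nu_2 + \bar R_{22}^{-1}\bar R_{12}^T\nu_1\big)^T \bar R_{22}\big(\nu_2 + \bar R_{22}^{-1}\bar R_{12}^T\nu_1\big).$$

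For the ``if'' direction I would assume $\bar R_{22}\succ 0$ and $S$ strictly copositive and read positivity straight off the identity. The second term is $\geq 0$ and vanishes exactly when $\nu_2 = -\bar R_{22}^{-1}\bar R_{12}^T\nu_1$. If $\nu_1 = \zeros{r}$ then $\nu_2 \neq \zeros{}$, so the first term is $0$ and the second is $\nu_2^T\bar R_{22}\nu_2 > 0$; if $\nu_1 \neq \zeros{r}$ (and $\nu_1 \geq \zeros{r}$) then strict copositivity makes the first term $\nu_1^T S\nu_1 > 0$ while the second stays $\geq 0$. In both cases $\nu^T\bar R\nu > 0$.

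For the ``only if'' direction the order of steps matters, since I cannot write $\bar R_{22}^{-1}$ before knowing $\bar R_{22}$ is invertible. First I would set $\nu_1 = \zeros{r}$: feasibility is automatic, and the hypothesis forces $\nu_2^T\bar R_{22}\nu_2 > 0$ for all $\nu_2 \neq \zeros{}$, i.e.\ $\bar R_{22}\succ 0$. With invertibility in hand the identity applies, and for fixed $\nu_1 \geq \zeros{r}$, $\nu_1 \neq \zeros{r}$ I would minimize $\nu^T\bar R\nu$ over the \emph{unconstrained} $\nu_2$; the minimum is attained at $\nu_2 = -\bar R_{22}^{-1}\bar R_{12}^T\nu_1$ and equals $\nu_1^T S\nu_1$. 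That minimizer is still a nonzero feasible $\nu$, so the hypothesis gives $\nu_1^T S\nu_1 > 0$, which is precisely strict copositivity of $S$.

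There is no deep obstacle here, only two points that must be respected. The argument works because the sign constraint sits on $\nu_1$ alone: with $\nu_2$ free, the inner minimization is an ordinary unconstrained quadratic whose optimum is exactly the Schur complement, and this is what ties copositivity of $S$ back to the original feasible region. Second, one must pin down $\bar R_{22}\succ 0$ (strict, not merely positive semidefinite) before invoking $\bar R_{22}^{-1}$, which is why the $\nu_1 = \zeros{r}$ slice is handled first; were $\bar R_{22}$ singular, taking $\nu_2$ in its kernel with $\nu_1 = \zeros{r}$ would already give $\nu^T\bar R\nu = 0$ and break the claim. Everything beyond this is the standard completion of squares, with no estimates required.
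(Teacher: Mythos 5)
Your proof is correct. Note that the paper does not prove this lemma at all: it is imported verbatim as Theorem~2.2 of \citet{martin1981copositive} and used as a black box inside the proof of Lemma~\ref{lem:ineq-con-qp}, so there is no in-paper argument to compare against. What you give is a valid self-contained derivation, and it is essentially the standard proof of Martin's theorem: reduce to the $\nu$-coordinates where the constraint reads $\nu_1 \geq \zeros{}$, establish $\bar R_{22} \succ 0$ first from the $\nu_1 = \zeros{}$ slice (correctly, before ever writing $\bar R_{22}^{-1}$), and then use the Schur-complement completion of squares together with the fact that the sign constraint does not touch $\nu_2$, so the inner minimization over $\nu_2$ is unconstrained and its value is exactly $\nu_1^T (\bar R_{11} - \bar R_{12}\bar R_{22}^{-1}\bar R_{12}^T)\nu_1$. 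Both directions are handled completely and the two delicate points (order of establishing invertibility, and why copositivity of the Schur complement is the right residual condition) are addressed explicitly.
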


\begin{lemma}[Theorem~2.1 of \citet{martin1981copositive}]
	\label{lem:martincp}
	If $\bar B = \begin{bmatrix} \bar B_1 & \bar B_2 \end{bmatrix}$, with $\bar B_1$ $r \times r$ invertible, then with $\bar R_{ij}$'s given as above,
	$w^T R w \geq 0$ whenever $\bar B w \geq \zeros{}$ if and only if
	\begin{itemize}
		\item $\bar R_{22}$ is positive semidefinite, $\nulsp(\bar R_{22}) \subseteq \nulsp(\bar R_{12})$, and
		\item $\bar R_{11}- \bar R_{12} \bar R_{22}^{\dagger} \bar R_{12}^T$ is copositive,
	\end{itemize}
	where $\bar R_{22}^{\dagger}$ is a pseudoinverse of $\bar R_{22}$.
\end{lemma}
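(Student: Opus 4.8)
The plan is to reduce the statement to a partial-minimization problem in the transformed coordinates. After the change of variables $w=T_B\nu$ described above, the claim is equivalent to characterizing when
\begin{equation*}
Q(\nu_1,\nu_2)\defeq \nu_1^T\bar R_{11}\nu_1+2\nu_1^T\bar R_{12}\nu_2+\nu_2^T\bar R_{22}\nu_2\ge 0
\end{equation*}
holds for all $\nu_1\ge\zeros{}$ and all (unconstrained) $\nu_2$. The guiding idea is to minimize over the free block $\nu_2$ first and then impose a condition on $\nu_1$. First I would establish the three conditions as necessary. Setting $\nu_1=\zeros{}$ forces $\nu_2^T\bar R_{22}\nu_2\ge 0$ for all $\nu_2$, i.e.\ $\bar R_{22}\succeq\zeros{}$. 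For the null-space inclusion, I would argue by contradiction: if some $d\in\nulsp(\bar R_{22})$ had $\bar R_{12}d\ne\zeros{}$, then since the standard basis vectors $\unitvec_i\ge\zeros{}$ span $\reals^r$ there is $\nu_1\ge\zeros{}$ with $\nu_1^T\bar R_{12}d\ne 0$. Evaluating $Q(\nu_1,td)=\nu_1^T\bar R_{11}\nu_1+2t\,\nu_1^T\bar R_{12}d$ (the quadratic term vanishes because $d^T\bar R_{22}d=0$) produces an affine function of $t$ with nonzero slope, hence unbounded below, contradicting $Q\ge 0$. Thus $\nulsp(\bar R_{22})\subseteq\nulsp(\bar R_{12})$.

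Granting $\bar R_{22}\succeq\zeros{}$ and $\nulsp(\bar R_{22})\subseteq\nulsp(\bar R_{12})$, I would carry out the inner minimization explicitly. The null-space inclusion guarantees $\bar R_{12}^T\nu_1\in\colsp(\bar R_{22})=\nulsp(\bar R_{22})^\perp$ for every $\nu_1$, so the convex quadratic $\nu_2\mapsto Q(\nu_1,\nu_2)$ attains its minimum at $\nu_2^\star=-\bar R_{22}^\dagger\bar R_{12}^T\nu_1$; here I would use that $\bar R_{22}\bar R_{22}^\dagger$ is the orthogonal projector onto $\colsp(\bar R_{22})$ to verify that $\nu_2^\star$ annihilates the gradient $2\bar R_{22}\nu_2+2\bar R_{12}^T\nu_1$. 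Completing the square and invoking the pseudoinverse identity $\bar R_{22}^\dagger\bar R_{22}\bar R_{22}^\dagger=\bar R_{22}^\dagger$ then yields the generalized Schur complement value
\begin{equation*}
\min_{\nu_2}Q(\nu_1,\nu_2)=\nu_1^T\bigl(\bar R_{11}-\bar R_{12}\bar R_{22}^\dagger\bar R_{12}^T\bigr)\nu_1.
\end{equation*}

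With this identity in hand the equivalence is immediate in both directions: $Q\ge 0$ over $\{\nu_1\ge\zeros{},\ \nu_2\text{ free}\}$ holds if and only if $\min_{\nu_2}Q(\nu_1,\nu_2)\ge0$ for every $\nu_1\ge\zeros{}$, which is exactly copositivity of $\bar R_{11}-\bar R_{12}\bar R_{22}^\dagger\bar R_{12}^T$. This closes the ``if'' direction (the two structural conditions make the inner minimization well posed and copositivity of the Schur complement gives $Q\ge0$) and completes the ``only if'' direction begun above. The main obstacle is the singular case $\bar R_{22}\not\succ\zeros{}$: one must show that uniform boundedness-below of the inner problem over the \emph{orthant} $\nu_1\ge\zeros{}$ (rather than all of $\reals^r$) is \emph{precisely} the inclusion $\nulsp(\bar R_{22})\subseteq\nulsp(\bar R_{12})$, and that the pseudoinverse formula still reports the exact minimum despite the non-uniqueness of $\nu_2^\star$. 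The remaining subtlety is bookkeeping, namely keeping the projector and pseudoinverse identities aligned so that $\nu_2^\star$ both zeroes the gradient and reproduces the Schur-complement value; the orthant-spanning step in the necessity argument is elementary once phrased through the basis vectors $\unitvec_i$.
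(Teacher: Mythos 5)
Your proposal is correct, but note that the paper itself offers no proof of this lemma: it is imported verbatim as Theorem~2.1 of \citet{martin1981copositive} and used as a black box inside the proof of Lemma~\ref{lem:ineq-con-qp}. What you have written is therefore a self-contained replacement for the citation, and it is the standard partial-minimization (generalized Schur complement) argument, carried out correctly. The three ingredients all check out: necessity of $\bar R_{22} \succeq \zeros{}$ by restricting to $\nu_1 = \zeros{}$; necessity of $\nulsp(\bar R_{22}) \subseteq \nulsp(\bar R_{12})$ via the affine-in-$t$ unboundedness along $(\nu_1, td)$ with $\nu_1 = \unitvec_i$ chosen so that $\unitvec_i^T \bar R_{12} d \neq 0$ (the quadratic term vanishes since $\bar R_{22} d = \zeros{}$); and the exact inner minimum $\min_{\nu_2} Q(\nu_1,\nu_2) = \nu_1^T (\bar R_{11} - \bar R_{12}\bar R_{22}^\dagger \bar R_{12}^T)\nu_1$, which is legitimate because the inclusion is equivalent (by taking orthogonal complements and using symmetry of $\bar R_{22}$) to $\bar R_{12}^T \nu_1 \in \colsp(\bar R_{22})$, so the projector identity $(I - \bar R_{22}\bar R_{22}^\dagger)\bar R_{12}^T \nu_1 = \zeros{}$ kills the gradient at $\nu_2^\star = -\bar R_{22}^\dagger \bar R_{12}^T \nu_1$, and convexity of the inner problem (from $\bar R_{22}\succeq\zeros{}$) makes the stationary point a global minimizer; the identity $\bar R_{22}^\dagger \bar R_{22} \bar R_{22}^\dagger = \bar R_{22}^\dagger$, together with the (implicitly used, routine) symmetry of $\bar R_{22}^\dagger$ for symmetric $\bar R_{22}$, then gives the stated value. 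The equivalence with copositivity of the Schur complement follows immediately in both directions, and your logical ordering is sound: conditions (i) and (ii) are established as necessary first, and only then invoked to make the Schur identity available for the necessity of (iii). The worry you flag at the end about uniform boundedness over the orthant is already resolved by your own basis-vector argument, since the nonnegative orthant spans $\reals^r$; nothing further is needed.
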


Using Lemmas~\ref{lem:martinstcp} and \ref{lem:martincp}, we now describe how to test our given QP and declare one of \ref{item:qpc1}, \ref{item:qpc2}, or \ref{item:qpc3}.
First, we compute the eigensystem of $\bar R_{22}$ and see which of the following disjoint categories it belongs to:
\begin{enumerate}[label=(PD\arabic*)]
	\item \label{item:pd1} All eigenvalues $\lambda_1, \dots, \lambda_{p-q-r}$ of $\bar R_{22}$ satisfy $\lambda_i > 0$.
	\item \label{item:pd2} $\forall i, \lambda_i \geq 0$, but $\exists i$ such that $\lambda_i=0$, and $\forall \nu_2$ s.t.\ $\bar R_{22} \nu_2 = 0$, we have $\bar R_{12} \nu_2 = 0$.
	\item \label{item:pd3} $\forall i, \lambda_i \geq 0$, but $\exists i$ such that $\lambda_i=0$, and $\exists \nu_2$ s.t.\ $\bar R_{22} \nu_2 = 0$ but $\bar R_{12} \nu_2 \neq 0$.
	\item \label{item:pd4} $\exists i$ such that $\lambda_i < 0$, i.e., $\exists \nu_2$ such that $\nu_2^T \bar R_{22} \nu_2 < 0$.
\end{enumerate}
If the test comes out \ref{item:pd3} or \ref{item:pd4}, then we can immediately declare \ref{item:qpc3} without having to look at copositivity. This is because if we get \ref{item:pd4}, we can choose $\nu_1 = 0$ so that $\nu^T \bar R \nu = \nu_2^T \bar R_{22} \nu_2 < 0$.
In case of \ref{item:pd3}, one can fix any $\nu_1$ satisfying $\nu_1^T \bar R_{12} \nu_2 \neq 0$, and by scaling $\nu_2$ to positive or negative we can get $\nu^T \bar R \nu \rightarrow -\infty$.
Notice that once we have these $\nu$ satisfying $\nu^T \bar R \nu < 0$, we can recover $\eta$ from $\nu$ by backtracking the transformations.

Next, compute the Pareto spectrum of $S \defeq \bar R_{11}- \bar R_{12} \bar R_{22}^{\dagger} \bar R_{12}^T$ and check which case $S$ belongs to:
\begin{enumerate}[label=(CP\arabic*)]
	\item \label{item:cp1} $S = \bar R_{11}- \bar R_{12} \bar R_{22}^{\dagger} \bar R_{12}^T$ is strictly copositive.
	\item \label{item:cp2} $S$ is copositive, but $\exists \nu_1 \geq \zeros{}, \nu_1 \neq \zeros{}$ such that $\nu_1^T S \nu_1 = 0$.
	\item \label{item:cp3} $\exists \nu_1 \geq \zeros{}$ such that $\nu_1^T S \nu_1 < 0$.
\end{enumerate}
Here, $\nu_1$'s are Pareto eigenvectors of $S$ defined in Lemma~\ref{lem:pareto}.
If we have \ref{item:cp3}, we can declare \ref{item:qpc3} because one can fix $\nu_2 = -\bar R_{22}^\dagger R_{12}^T \nu_1$ and get $\nu^T \bar R \nu = \nu_1^T S \nu_1 < 0$.
If the tests come out \ref{item:pd1} and \ref{item:cp1}, by Lemma~\ref{lem:martinstcp} we have \ref{item:qpc1}. For the remaining cases, we conclude \ref{item:qpc2}.

\subsection{Proof of Lemma~\ref{lem:scaleinvar}}
\label{sec:pflemscaleinvar}
With the given $\eta$,
\begin{equation*}
\Delta_1
=
\begin{bmatrix} 
\zeros{(k-1) \times d_x} \\ \matrow{W_1}{k} \\ \zeros{(d_h - k) \times d_x}
\end{bmatrix},~~
\delta_1
=
\begin{bmatrix} 
\zeros{k-1} \\ \veccomp{b_1}{k} \\ \zeros{d_h - k}
\end{bmatrix},~~
\Delta_2 = 
\begin{bmatrix} 
\zeros{d_y \times (k-1)} & -\matcol{W_2}{k} & \zeros{d_y \times (d_h - k)}
\end{bmatrix}.
\end{equation*}
It is straightforward to check that for all $i \in [m]$, 
\begin{align*}
\pert Y_1(x_i) 
= \Delta_2 O(x_i) + W_2 J(x_i) (\Delta_1 x_i + \delta_1)
= -\veccomp{O(x_i)}{k} \matcol{W_2}{k} + W_2 
\begin{bmatrix}
\zeros{k-1} \\ \veccomp{O(x_i)}{k} \\ \zeros{d_h - k}
\end{bmatrix}
= \zeros{}.
\end{align*}
From this, $g(z,\eta)^T \eta = \sum_{i} \nabla \ell_i ^T \pert Y_1(x_i) = 0$.
For the second order terms,
\begin{align*}
\eta^T H(z,\eta) \eta 
&= \sum_{i=1}^m \nabla \ell_i ^T \pert Y_2(x_i) + \half \sum_{i=1}^m \npsd{\pert Y_1(x_i)}{\nabla^2 \ell_i}^2
= \sum\nolimits_{i=1}^m\nabla \ell_i ^T \Delta_2 J(x_i) (\Delta_1 x_i + \delta_1)\\
&= \sum\nolimits_{i=1}^m\nabla \ell_i ^T (-\veccomp{O(x_i)}{k} \matcol{W_2}{k})
= - \left ( \sum\nolimits_{i=1}^m \veccomp{O(x_i)}{k} \nabla \ell_i ^T \right ) \matcol{W_2}{k}.
\end{align*}
From the fact that $z$ is a FOSP of $\emprisk$, it follows that $\sum\nolimits_{i} \nabla \ell_i O(x_i)^T = \zeros{}$, so $\eta^T H(z,\eta) \eta  = 0$.

\subsection{Proof of Lemma~\ref{lem:feassets}}
\label{sec:pflemfeassets}
The first equality is straightforward, because it follows from $S_{i,k} = \{-1, +1\}$ for all $i \in B^{(1)}_k$ that taking union of $\{\bar x_i^T v_k \leq 0\}$ and $\{\bar x_i^T v_k \geq 0\}$ will eliminate the inequality constraints for $i \in B_k^{(1)}$.

For the next equality, we start by expressing $\mc U_1 \defeq \left \{v_k \mid g_k(z, v_k)^T v_k = 0 \right\}$ as a linear combination of its linearly independent components. 
The set $\mc U_1$ can be expressed in the following form:
\begin{equation*}
\mc U_1 = \{ v_\perp + \sum_{i \in B^{(1)}_k} \alpha_i \hat v_{i,k} + \sum_{i \in B^{(2)}_k} \beta_i \hat v_{i,k} 
\mid v_\perp \in \mc V_k^\perp, \forall i \in B^{(1)}_k, \alpha_i \in \reals, \text{ and } \forall i \in B^{(2)}_k, \beta_i\geq 0\},
\end{equation*}
where $\hat v_{i,k} \in \mc V_k \cap \spann \{ \bar x_j \mid j \in B_k \setminus \{i\} \}^\perp$ for all $i \in B^{(1,2)}_k$. Additionally, for $i \in B^{(2)}_k$, $\hat v_{i,k}$ is in the direction that satisfies $\sigma_{i,k} = \sign(\bar x_i^T \hat v_{i,k})$.
To see why $\mc U_1$ can be expressed in such a form,
first note that at the moment $\algso$ is executed, it is already given that the point $z$ is a FOSP. 
So, for any perturbation $v_k$ we have $g_k(z, v_k) \in \mc V_k$, and $g_k(z,v_k)^T v_\perp = 0$ for any $v_\perp \in \mc V_k^\perp$. For the remaining components, please recall $\algfoinc$ and Lemma~\ref{lem:extrays}; $\hat v_{i,k}$ are flat extreme rays, so they are the ones satisfying $g_k(z, v_k)^T v_k = 0$.

It remains to show that $\mc U_2 \defeq  \{ v_k \mid \forall i \in B^{(3)}_k, \bar x_i^T v_k = 0, \text{ and } \forall i \in B^{(2)}_k, \sigma_{i,k} \bar x_i^T v_k \geq 0  \} = \mc U_1$. We show this by proving $\mc U_1 \subseteq \mc U_2$ and $\mc U_1^c \subseteq \mc U_2^c$.

To show the first part, we start by noting that for any $v_\perp \in \mc V_k^\perp$, $\bar x_i^T v_\perp = 0$ for $i \in B_k^{(2,3)}$ because $\bar x_i \in \mc V_k$ for these $i$'s. Also, for all $i\in B_k^{(1)}$, it follows from the definition of $\hat v_{i,k}$ that $\bar x_j^T \hat v_{i,k} = 0$ for all $j \in B_k^{(2,3)}$. Similarly, for all $i \in B_k^{(2)}$, $\bar x_j^T \hat v_{i,k} = 0$ for all ${j \in B_k^{(2,3)} \setminus \{i\}}$, and $\sigma_{i,k} \bar x_i^T \hat v_{i,k} > 0$. Therefore, any $v_k \in \mc U_1$ must satisfy all constraints in $\mc U_2$, hence $\mc U_1 \subseteq \mc U_2$.

For the next part, we prove that $v_k \in \mc U_1^c$ violates at least one constraint in $\mc U_2$. Observe that the whole vector space $\reals^p$ can be expressed as
\begin{align*}
\reals^p = \{ v_\perp + \sum\nolimits_{i \in B^{(1)}_k} \alpha_i \hat v_{i,k} + \sum\nolimits_{i \in B^{(2)}_k} & \beta_i \hat v_{i,k} + w \mid w \in \mc V_k \cap \spann\{\hat v_{i,k}, i\in B^{(1,2)}_k \}^\perp,\\
&v_\perp \in \mc V_k^\perp, 
\forall i \in B^{(1)}_k, \alpha_i \in \reals, \text{ and } \forall i \in B^{(2)}_k, \beta_i \in \reals\}.
\end{align*}
Therefore, any $v_k \in \mc U_1^c$ either has a nonzero component $w$ in $V_k \cap \spann\{\hat v_{i,k}, i\in B^{(1,2)}_k \}^\perp$ or there exists $i \in B^{(k)}_2$ such that $\beta_i < 0$.
By definition, $\hat v_{i,k} \in \spann\{\bar x_j \mid j \in B^{(3)}_k \}^\perp$ for any $i \in B^{(1,2)}_k$, which implies that $V_k \cap \spann\{\hat v_{i,k}, i\in B^{(1,2)}_k \}^\perp = \spann\{\bar x_j \mid j \in B^{(3)}_k \}$. Thus, a nonzero component $w \in \spann\{\bar x_j \mid j \in B^{(3)}_k \}$ will violate some equality constraints in $\mc U_2$.
Next, in case where $\exists i \in B^{(k)}_2$ such that $\beta_i < 0$, this violates the inequality constraint corresponding to $i$.

\end{document}